\def\J{{\mathop{\rm J}}}
\def\Jac{{\mathop{\rm Jac}}}
\newcommand{\Cr}{\mathscr C}
\newcommand{\Verte}{\textrm Vert}
\newcommand{\di}{\textrm{d}}
\newcommand{\R}{\mathbb R}
\newcommand{\C}{\mathbb C}
\newcommand{\Z}{\mathbb Z}
\newcommand{\Arg}{\mathrm{Arg}\,}
\newcommand{\Log}{\mathrm{Log}\,}
\newcommand{\A}{\mathscr A}
\newcommand{\coA}{co\mathscr{A}}
\newcommand{\Si}{\mathscr S}
\newcommand{\conj}{\rm conj}
\newtheorem{remark}{Remark}[section]
\newtheorem{theorem}{Theorem}[section]
\newtheorem{definition}{Definition}[section]
\newtheorem{proposition}{Proposition}[section]
\newtheorem{corollary}{Corollary}[section]
\newtheorem{lemma}{Lemma}[section]
\begin{document}
\title{Analytic varieties with finite volume amoebas are algebraic}
\author{Farid Madani and Mounir Nisse}
\date{}
\address{NWF I-Mathematik, Universit\"at Regensburg, D-93040 Regensburg, Germany.}
%\curraddr{}
\email{\href{mailto:Farid.Madani@mathematik.uni-regensburg.de}{Farid.Madani@mathematik.uni-regensburg.de}}

\thanks{The first author was supported by Alexander von Humboldt Foundation and by the Mathematisches Forschungsinstitut Oberwolfach in 2011, through the program "Oberwolfach Leibniz Fellows.}
\address{Department of Mathematics, Texas A\&M University, College Station, TX 77843-3368, USA.}
\email{\href{mailto:nisse@math.tamu.edu}{nisse@math.tamu.edu}}
\thanks{Research of  the second author is partially supported by NSF MCS grant DMS-0915245.}
%%%%%%%%%%%%%%%%%%%%%%%%%%%%%%%%%%%%%%%%%%%%%%%%%%%%%%%%%%%%%%%%%%%%%%%%%%%%
\subjclass{14T05, 32A60}
\keywords{Analytic varieties, algebraic varieties, amoebas, coamoebas, logarithmic limit sets, phase limit sets, spherical polyhedrons}
\maketitle

\begin{abstract} In this paper, we study the amoeba volume of a given $k-$dimensional generic analytic variety $V$ of the complex algebraic torus $(\C^*)^n$. When $n\geq 2k$, we show that $V$ is algebraic if and only if the volume of its amoeba is finite. In this precise case, we establish a comparison theorem for the volume of the amoeba and the coamoeba. Examples and applications to the $k-$linear spaces will be given.
\end{abstract}

 % such that $n\geq 2 \dim (V)$. We show that such varieties are  algebraic if and only if the volume of their  amoebas is finite. 
%We say that $V$ is generic if the jacobian of the restriction of the logarithmic map to $V$ is of maximal rank.

%\setcounter{tocdepth}{1} \tableofcontents

%%%%%%%%%%%%%%%%%%%%%%%%%%%%%
\section{Introduction}
%%%%%%%%%%%%%%%%%%%%%

    Some fundamental questions concerning the complex logarithm  lead us to  study certain  mathematical objects 
called amoebas and coamoebas, which are natural projections of complex varieties. They  have a strong  relations
 to several other areas of mathematics  as  real algebraic geometry,  tropical geometry, complex analysis, mirror 
symmetry, algebraic statistics and in several other areas. 
 Amoebas degenerate to a piecewise-linear object called {\em tropical varieties}, (see \cite{M1-02}, \cite{M2-04},
 \cite{M3-00}, \cite{FPT-00}, \cite{NS-09}, \cite{PR-04}, and \cite{PS-04}).
The behavior of a coamoeba at infinity is the so called the {\em phase limit set} of the variety itself.
 For a $k$-dimensional complex algebraic variety, the phase limit set contains an arrangement of $k$-dimensional flat
torus, which plays a crucial role on the geometry and the topology of both amoeba and coamoeba.
Moreover, these objects  are used  as an intermediate link between the classical and the tropical geometry. In this paper, 
we underline that the geometry of the amoeba  affects the algebraic structure of the variety.

%The importance of the  study of these objects  is growing, and allows us to   some  algebraic and topological properties. %In fact, in this paper we show that the geometry of the amoeba can affect
It was shown by Passare and Rullg\aa rd \cite{PR-04} that the area of complex algebraic plane curve amoebas are finite.  In \cite{MN-11},  we proved that the volume of the amoeba of  a generic $k$-dimensional algebraic variety  of the complex algebraic torus $(\mathbb{C}^*)^{n}$ with $n\geq 2k$, is finite. 
There now arises the reciprocal question: Let $\mathcal{I}$ be an ideal in the ring of holomorphic function on $\mathbb{C}^n$, and $\overline{V}$ be the set of zeros of $\mathcal{I}$ which we assume generic of dimension $k\leq \frac{n}{2}$. If we  assume that the volume of the amoeba of $V = \overline{V}\cap (\mathbb{C}^*)^{n}$ is finite, then,  is $V$ algebraic? Theorem  \ref{main theorem} gives an affirmative answer to this question. 

\newpage

\noindent The {\it Main theorem} of this paper is the following:

\vspace{0.2cm}

\begin{theorem}\label{main theorem}
Let $V$ be a generic $k$-dimensional analytic variety in $(\C^*)^n$ with $n\geq 2k$. The following assertions are equivalent: \\ 
(i) The variety $V$ is algebraic;\\
(ii) The volume of $\A(V)$ is finite.
\end{theorem}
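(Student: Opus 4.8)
The implication (i)$\Rightarrow$(ii) is precisely the main result of \cite{MN-11}, so the whole content is (ii)$\Rightarrow$(i). Assume $\mathrm{vol}(\A(V))<\infty$; the aim is to realize $V$ as the trace on $(\C^*)^n$ of an analytic — hence, by Chow's theorem, algebraic — subvariety of a smooth complete toric variety. The plan has three stages: (1) show that finiteness of the amoeba volume forces a strong asymptotic rigidity of $V$, namely that its logarithmic limit set $\A_\infty(V)\subset S^{n-1}$ is a \emph{rational} spherical polyhedron of dimension $\le k-1$ and that $V$ has finitely many ends, each asymptotic to a translate of an algebraic cylinder over a rational subtorus; (2) choose a smooth complete (projective) toric variety $X_\Sigma\supset(\C^*)^n$ whose fan $\Sigma$ refines the recession fan of $\mathrm{cone}(\A_\infty(V))$, let $\overline V$ be the closure of $V$ in $X_\Sigma$, and show, using (1), that $V$ has locally finite $2k$-dimensional Hausdorff measure near every point of the toric boundary $D=X_\Sigma\setminus(\C^*)^n$; (3) conclude by Bishop's extension theorem that $\overline V$ is a pure $k$-dimensional analytic subvariety of $X_\Sigma$, hence algebraic by Chow/GAGA, so that $V=\overline V\cap(\C^*)^n$ is algebraic (note $\overline V\cap(\C^*)^n=V$ since $V$ is closed in the torus).

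\textbf{From finite amoeba volume to local finiteness near the boundary (step (2)).} For a ray $\rho$ of $\Sigma$, points of $(\C^*)^n$ approaching the divisor $D_\rho$ have $\mathrm{Log}$-image escaping to infinity along $-\rho$; hence $\overline V$ can meet $D_\rho$ only when $-\rho\in\A_\infty(V)$, and near such a point $V$ is — by step (1) — a finite branched cover of a half-cylinder over a coordinate subtorus, with bounded sheet number. The map $\mathrm{Log}\colon V\to\A(V)$ is, by $n\ge 2k$ and genericity, generically finite, and its restriction to the smooth part of $V$ is $1$-Lipschitz for the flat product metric on $(\C^*)^n=\R^n\times(S^1)^n$ (the fibres sitting inside the compact torus $(S^1)^n$); combining this with the bounded sheet number and the coarea formula — and checking that the contribution of the ramification locus of $\mathrm{Log}|_V$ is integrable, which uses genericity — one gets $\mathcal H^{2k}(V\cap U)<\infty$ for a small neighbourhood $U$ of a boundary point. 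Passing to the toric chart containing that point only improves the bound, since the chart compresses the end at infinity, so Bishop's theorem applies and step (3) is formal.

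\textbf{The crux.} The real work is step (1): converting the soft analytic hypothesis $\mathrm{vol}(\A(V))<\infty$ into the rigid, arithmetic conclusion that $\A_\infty(V)$ is a rational polyhedral complex of dimension $\le k-1$ and that $V$ has finitely many tame ends. An arbitrary analytic variety can run to infinity along a continuum of directions and with unbounded multiplicity, and this must be excluded quantitatively. The mechanism: near infinity $\A(V)$ lies in a tubular neighbourhood of the cone over $\A_\infty(V)$; if $\A_\infty(V)$ had a component of dimension $\ge k$, or an irrational, non-conical, or non-polyhedral piece, or if $V$ had infinitely many or non-tame ends, one produces an infinite family of essentially disjoint portions of $\A(V)$ escaping to infinity whose $2k$-volumes diverge. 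Dually, finiteness forces the "width" of $\A(V)$ transverse to the cone over $\A_\infty(V)$ to decay along every ray, and a decaying width constrains the Puiseux-type asymptotic expansion of $V$ along that ray to be of finite type, producing a rational slope and a rational subtorus; the phase limit set enters here, describing the asymptotic directions of the argument map and pinning down, along each face of $\A_\infty(V)$, the finitely many translated $\Z$-subtori that control the ends. This rigidity step — which also yields the bounded sheet number used in step (2) — is the main obstacle; the toric-compactification/Bishop/Chow part of the argument is then routine.
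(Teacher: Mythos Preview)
Your overall architecture is genuinely different from the paper's and is a reasonable thing to try: the paper does \emph{not} compactify and invoke Bishop/Chow. Instead, after establishing (your step (1)) that $\mathscr L^\infty(V)$ is a finite rational $(k-1)$-polyhedron, the paper proves a factorization lemma (Lemma~\ref{hdecomposition lmm}): for each vertex $s$ of $\mathscr L^\infty(V)$ one can choose generators $f_j=h_jg_j$ of $\mathcal I(V)$ with $h_j$ nonvanishing near the corresponding holomorphic cylinders and $g_j$ entire with exponents in the half-space $\{\alpha\cdot u_s\le 0\}$. Running over all vertices, Henriques' convexity (Lemma~\ref{henriques lmm}) forces the intersection of these half-spaces to be bounded, so the $g_j$ become polynomials $p_j$; since $V$ is irreducible and contained in their common zero locus, $V$ is a component of an algebraic curve/variety, hence algebraic. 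No extension theorem is used.

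Your step (2), however, has a concrete error. From ``$\Log$ is $1$-Lipschitz for the flat product metric and generically $N$-to-$1$'' the coarea formula gives
\[
\int_V |\mathrm J(\Log)|\,d\mathcal H^{2k}_{\mathrm{flat}}=\int_{\A(V)} N(y)\,dy\le N\cdot\mathrm{vol}(\A(V)),
\]
which, since $|\mathrm J(\Log)|\le 1$, is a \emph{lower} bound for $\mathcal H^{2k}_{\mathrm{flat}}(V)$, not an upper bound. In fact $\mathcal H^{2k}_{\mathrm{flat}}(V\cap U)=\infty$ for any $U$ reaching the toric boundary: already for the line $V=\{(z,1+z)\}\subset(\C^*)^2$, the end $z\to 0$ is isometric (in the flat metric) to a half-cylinder $\R_{<0}\times S^1$ of infinite area. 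So ``passing to the toric chart only improves the bound'' starts from $+\infty$ and says nothing. What Bishop actually needs is finiteness of $\mathcal H^{2k}$ in the \emph{smooth metric of $X_\Sigma$}, and this cannot be read off from $\mathrm{vol}(\A(V))$ via Lipschitz/coarea; it requires knowing that each end of $V$ itself (not just of $\A(V)$) is, in the adapted toric chart, a finite analytic branched cover of a polydisk. That is strictly stronger than ``$\mathscr L^\infty(V)$ rational polyhedral'' and is precisely the content the paper extracts with its decomposition lemma --- which it then uses to produce polynomials directly, short-circuiting the compactification route. If you want to salvage your approach, you must replace the coarea paragraph by a genuine local model for the ends of $V$ in toric coordinates (e.g.\ via a Puiseux-type argument you allude to), and that is not lighter than the paper's Lemma~\ref{hdecomposition lmm}.
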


%Using the geometry and the combinatorial structure of the logarithmic limit and the phase limit sets, we prove Theorem \ref{main theorem} in the special case of curves. The proof of this case is actually the crucial step to prove Theorem \ref{main theorem} in  general.
%The proof of the special case of curves is in fact the crucial step
% in the proof of the Main theorem. Using the geometry and the combinatorial structure of the logarithmic limit and the phase limit sets, we prove the curve case.
%we start by giving the proof of our Main theorem \ref{main theorem} in the curve case; it is done in section \ref{curve section}.

This paper is organized as follows. In Section \ref{curve section}, using the geometry and the combinatorial structure of the logarithmic limit and the phase limit sets, we prove Theorem \ref{main theorem} in the special case of curves. The proof of this case is actually the crucial step to show the general one. In Section \ref{proof main}, we show
 Theorem \ref{main theorem} for varieties of higher dimensions. In Section \ref{examples}, we give some examples of plane and spatial complex curves, underlying the importance of the finiteness of their amoeba areas. In Section \ref{comparison vol}, we prove a comparison theorem, asserting that up to rational number, the amoeba volume is bounded above and below by the coamoeba volume. Finally, in Section \ref{k-plan}, we give an application of the comparison theorem to the $k-$dimensional affine linear spaces in $(\C^*)^{2k}$. We compute the amoeba volumes of  $k-$dimensional  real affine linear  spaces in $(\C^*)^{2k}$.
 
%%%%%%%%%%%%%%%%%%%%%%%%%%%%%%%%%%%%% 
\section{Preliminaries}\label{prelim}
%%%%%%%%%%%%%%%%%%%%%%%%%%%%%%%%%%%%%

Let $W$ be a complex variety in $\mathbb{C}^n$ defined by an ideal $\mathcal{I}$ of holomorphic functions  on $\mathbb{C}^n$. We say that a subvariety $V$  of the complex algebraic torus  $(\mathbb{C}^*)^n$ is analytic if there exists a complex variety $W$ as above such that $V:=W\cap (\mathbb{C}^*)^n$. All the analytic varieties considered in this paper are defined as above.
The {\em amoeba}  $\A$ of $V$ is by definition (see M. Gelfand, M.M. Kapranov
 and A.V. Zelevinsky \cite{GKZ-94}) the image of $V$ under the map :
\[
\begin{array}{ccccl}
\Log&:&(\mathbb{C}^*)^n&\longrightarrow&\mathbb{R}^n\\
&&(z_1,\ldots ,z_n)&\longmapsto&(\log |z_1|,\ldots ,\log|
z_n|).
\end{array}
\]
It is well known that the amoeba of a variety of codimension one  is closed and its complement components in $\mathbb{R}^n$ are convex (see \cite{FPT-00} ).
A. Henriques  gives an analogous definition for the convexity of amoeba complements  of higher codimension varieties as follows:
 \begin{definition}
A subset $A\subset \mathbb{R}^n$ is called $l$-convex if for any $l$-plane $L \subset \mathbb{R}^n$ the induced homomorphism $H_{l-1}(L\setminus A) \longrightarrow H_{l-1}(\mathbb{R}^n\setminus A)$ is injective.
\end{definition} 
Also, he proves that if  $V$ is a variety of codimension $l$ and $L$ is an $l$-plane of rational slope and $c$ is a non-zero $(l-1)$-cycle  in $H_{l-1}(L\setminus \mathscr{A}) $   then its image in $H_{l-1}(\mathbb{R}^n\setminus \mathscr{A})$ is non-zero, and then $\mathbb{R}^n\setminus \mathscr{A}$ is $l$-convex (see \cite{H-03} Theorem 4.1).
  
\vspace{0.1cm}

Let $V$ be an analytic variety as above. We denote by $\mathscr L^\infty (V)$ its logarithmic limit set which is the boundary of the closure of $r(\A(V))$ in the $n-$dimenstional ball $B^n$, where $r$ is the map defined by (see Bergman \cite{B-71}):
\[
\begin{array}{ccccl}
r&:&\mathbb{R}^n&\longrightarrow&B^n\\
&&x&\longmapsto&r(x) = \frac{x}{1+|x|}.
\end{array}
\]     
If $V$ is  an algebraic variety of dimension $k$, then its {\ logarithmic limit set} is a finite rational spherical polyhedron of dimension $k-1$ (i.e., a finite union of finite intersections of closed hemispheres and can be described in terms of  a finite number of inequalities with integral coefficients). More precisely,  we have  the following theorem structure  (see \cite{B-71} and \cite{BG-84}):

\begin{theorem}
[Bergman, Bieri-Groves] The logarithmic limit set $\mathscr{L}^{\infty}(V)$ of an algebraic variety $V$ in $(\mathbb{C}^*)^n$ is a finite union of rational spherical polyhedrons. The maximal dimension of a polyhedron in this union is achieved at least by one polyhedron $P$ in this union, and we have $\dim_{\mathbb{R}}\mathscr{L}^{\infty}(V) = \dim_{\mathbb{R}}P = (\dim_{\mathbb{C}}V) - 1$.
\end{theorem}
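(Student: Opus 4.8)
The plan is to identify $\mathscr{L}^{\infty}(V)$ with the trace on the unit sphere $S^{n-1}$ of the tropicalization $\mathrm{Trop}(V)\subset\R^n$, and then to read the polyhedral statement and the dimension statement off the structure of $\mathrm{Trop}(V)$. Since $\mathscr{L}^{\infty}(V_1\cup V_2)=\mathscr{L}^{\infty}(V_1)\cup\mathscr{L}^{\infty}(V_2)$ and the dimension of a finite union is the maximum of the dimensions, I would first reduce to the case in which $V$ is irreducible, so that $\mathcal{I}:=\mathcal{I}(V)$ is a prime ideal of Krull dimension $k$ in the Laurent polynomial ring $\C[x_1^{\pm1},\dots,x_n^{\pm1}]$. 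The key first step is the identification
\[
\mathscr{L}^{\infty}(V)=\mathrm{Trop}(V)\cap S^{n-1},\qquad \mathrm{Trop}(V):=\{\,w\in\R^n : \mathrm{in}_w(\mathcal{I})\text{ contains no monomial}\,\},
\]
where $\mathrm{in}_w(\mathcal{I})$ denotes the initial ideal with respect to the weight $w$. Writing a point of $V$ with $\Log|z|\to\infty$ as a convergent Puiseux arc and extracting its leading exponents realizes each asymptotic direction of $\A(V)$ as such a weight; conversely, each $w$ with $\mathrm{in}_w(\mathcal{I})$ monomial-free is the valuation of a point of $V$ over the field of Puiseux series, which one approximates by honest points of $V\subset(\C^*)^n$ whose logarithmic images have direction tending to $w/|w|$. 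This equivalence --- in modern language the Fundamental Theorem of tropical geometry, and in the original sources the valuation-theoretic argument of Bieri--Groves --- is where the substance of Bergman's theorem lies, and I expect the inclusion asserting that every tropical direction is genuinely attained to be the main obstacle.

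Granting this identification, the polyhedral structure follows from Gr\"obner theory. After homogenizing $\mathcal{I}$, the associated Gr\"obner fan $\mathrm{GF}(\mathcal{I})$ is a complete fan whose cones are rational polyhedra (cut out by linear inequalities among the exponent vectors occurring in a reduced Gr\"obner basis) and on whose relative interiors the initial ideal $\mathrm{in}_w(\mathcal{I})$ is constant. Hence the property ``$\mathrm{in}_w(\mathcal{I})$ contains no monomial'' is constant on each relative interior, so $\mathrm{Trop}(V)$ is a finite union of cones of $\mathrm{GF}(\mathcal{I})$, i.e. a finite rational polyhedral fan; intersecting it with $S^{n-1}$ exhibits $\mathscr{L}^{\infty}(V)$ as a finite union of rational spherical polyhedrons, as claimed.

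It remains to compute the dimension. For the upper bound, let $\sigma\subset\mathrm{Trop}(V)$ be a cone of $\mathrm{GF}(\mathcal{I})$ of maximal dimension $d$, let $w$ lie in its relative interior, and let $L=\mathrm{span}(\sigma)$, a rational subspace. By maximality $\mathrm{in}_w(\mathcal{I})$ is unchanged under perturbation of $w$ inside $L$, which means precisely that $V(\mathrm{in}_w(\mathcal{I}))$ is invariant under the $d$-dimensional subtorus $T_L\subset(\C^*)^n$ determined by $L$. Since $\mathrm{in}_w(\mathcal{I})$ is monomial-free, $V(\mathrm{in}_w(\mathcal{I}))$ meets $(\C^*)^n$ and so contains a full $T_L$-orbit, whence $\dim V(\mathrm{in}_w(\mathcal{I}))\geq d$; on the other hand the Gr\"obner degeneration is flat, so $\dim V(\mathrm{in}_w(\mathcal{I}))=\dim V(\mathcal{I})=k$, and therefore $d\leq k$. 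For the reverse inequality I would choose a generic monomial map $\phi\colon(\C^*)^n\to(\C^*)^{k+1}$ for which $\overline{\phi(V)}$ is a hypersurface $H$ --- possible since $\dim V=k$, once one has reduced (as one may) to the case where $V$ is not contained in a translate of a proper subtorus. Then $\mathrm{Trop}(H)$ is the corner locus of the tropical Laurent polynomial defining $H$, hence pure of dimension $k$, and it equals the image of $\mathrm{Trop}(V)$ under the surjective linear map $\R^n\to\R^{k+1}$ induced by $\phi$ (tropicalization commutes with torus projections); thus $\dim_{\R}\mathrm{Trop}(V)\geq k$. Combining the two bounds gives $\dim_{\R}\mathrm{Trop}(V)=k$, this value being attained by some $k$-dimensional cone $\sigma_0$, and then $P:=\sigma_0\cap S^{n-1}$ is a rational spherical polyhedron realizing $\dim_{\R}\mathscr{L}^{\infty}(V)=\dim_{\R}P=k-1=(\dim_{\C}V)-1$.
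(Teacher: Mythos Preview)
The paper does not prove this theorem: it is stated in the Preliminaries section as a known structure result, with references \cite{B-71} and \cite{BG-84}, and no argument is supplied. So there is no proof in the paper to compare your proposal against.

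Your outline is a correct modern proof via tropical geometry and Gr\"obner fans, and it differs in spirit from the original sources you are meant to be summarizing. Bergman worked directly with the amoeba and limits of $\frac{1}{t}\Log$ as $t\to\infty$ to identify the cone over $\mathscr{L}^\infty(V)$; Bieri--Groves recast the question valuation-theoretically, describing the cone as the set of real valuations on the coordinate ring trivial on $\C$, and obtained rationality and the dimension formula from the structure of such valuations. Your route---initial ideals, the Gr\"obner fan, flat degeneration for the upper bound, and a monomial projection to a hypersurface for the lower bound---is the Sturmfels/Speyer--Sturmfels packaging of the same facts. It is cleaner for the polyhedrality statement (the Gr\"obner fan gives it for free), whereas the valuation approach is arguably more transparent for the hard inclusion you flag (every monomial-free initial ideal is actually realized by an arc in $V$). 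One small point to tighten: in the lower-bound step you invoke that tropicalization commutes with monomial projections; this is correct for the \emph{closure} of the image, and you need the linear map $\R^n\to\R^{k+1}$ to be surjective on lattices, so ``generic monomial map'' should be read as ``induced by a surjective lattice map chosen so that $\overline{\phi(V)}$ is a hypersurface.''
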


The argument map is the map  defined as follows:
\[
\begin{array}{ccccl}
\Arg&:&(\mathbb{C}^*)^n&\longrightarrow&(S^1)^n\\
&&(z_1,\ldots ,z_n)&\longmapsto&(\arg (z_1),\ldots ,\arg (
z_n) ).
\end{array}
\]
where $\arg (z_j) = \frac{z_j}{|z_j|}$. The {\em coamoeba} of $V$,  denoted by $co\mathscr{A}$, is its image under the argument map (defined for the first time by Passare on 2004).  On 2009, Sottile and the second author \cite{NS-09} define the {\em phase limit set}  of $V$, $\mathscr{P}^{\infty}(V)$ , as the set of accumulation points of arguments of sequences in $V$ with unbounded logarithm. If $V$ is an algebraic  variety of dimension $k$,  $\mathscr{P}^{\infty}(V)$  contains an arrangement of $k$-dimensional real sub-torus.

%%%%%%%%%%%%%%%%%%%%%%%%%%%%%%%%%%%%%%
\section{Comparison between Amoeba and Coamoeba volumes}\label{comparison vol}
%%%%%%%%%%%%%%%%%%%%%%%%%%%%%%%%%%%%%%

For a given map $f$, we denote by $\Jac(f)$ the Jacobian matrix of $f$, and by $\J(f)$ the determinant of $\Jac(f)$ when it exists.

\begin{proposition}\label{jacobian det}
Let $V$ be a $k$-dimensional complex submanifold in $(\C^*)^n$. The maps $\Log$ and $\Arg$ are well defined on $V$ and  
\begin{equation*}
 \partial \Log=\frac{1}{\Arg}\partial\Arg,\quad   \bar\partial \Log=\frac{-1}{\Arg}\bar\partial\Arg, \quad \bigl|\J(\Log_I)\bigr|=\bigl|\J(\Arg_I)\bigr|,
\end{equation*} 
for any $I\subset\{1,\ldots,n\}$ with cardinal $2k$.
\end{proposition}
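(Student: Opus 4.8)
The well-definedness is immediate: on $V\subset(\C^*)^n$ no coordinate $z_j$ vanishes, so each $\log|z_j|$ is a smooth real function on $V$ and each $\arg z_j=z_j/|z_j|$ a smooth $S^1$-valued one, hence $\Log$ and $\Arg$ restrict to $V$. The two displayed identities and the Jacobian equality all rest on a single structural fact: since $V$ is a \emph{complex} submanifold, each $z_j$ restricts to a holomorphic map $V\to\C^*$, so $\log z_j$ is locally holomorphic on $V$, whence $\bar\partial(\log z_j)=0$ and $\partial(\log\bar z_j)=0$ there. Everything after this is linear algebra.

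For the differential identities I would start from $2\log|z_j|=\log z_j+\log\bar z_j$ and $2\log(\arg z_j)=\log z_j-\log\bar z_j$ on $V$, where $\arg z_j$ denotes a local real branch $\phi_j$ of the argument. Applying $\partial$, which annihilates $\log\bar z_j$, and $\bar\partial$, which annihilates $\log z_j$, gives componentwise $2\,\partial\Log=\di z_j/z_j$, $2\,\bar\partial\Log=\overline{\di z_j/z_j}$, $(\partial\Arg)/\Arg=\tfrac12\,\di z_j/z_j$, and $(\bar\partial\Arg)/\Arg=-\tfrac12\,\overline{\di z_j/z_j}$; comparing these four relations yields $\partial\Log=\tfrac1{\Arg}\partial\Arg$ and $\bar\partial\Log=-\tfrac1{\Arg}\bar\partial\Arg$, with nothing depending on the chosen branch.

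For the Jacobian equality I would fix local holomorphic coordinates $w_1,\dots,w_k$ on $V$, $w_a=u_a+iv_a$, and first record a Wirtinger normalization: for any real smooth functions $g_1,\dots,g_{2k}$ on $V$, the real Jacobian determinant relative to $(u_1,v_1,\dots,u_k,v_k)$ has modulus $2^k\,|\det M|$, where $M$ is the $2k\times 2k$ complex matrix whose columns are $(\partial g_j/\partial w_a)_j$ and $(\partial g_j/\partial\bar w_a)_j$; indeed, passing to the basis $\partial_{w_a}=\tfrac12(\partial_{u_a}-i\partial_{v_a})$, $\partial_{\bar w_a}=\tfrac12(\partial_{u_a}+i\partial_{v_a})$ multiplies the determinant by $(i/2)^k$. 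I would then apply this with $g_j=\log|z_j|$ and with $g_j=\phi_j$ for $j\in I$: since $\Arg=e^{i\phi}$ componentwise, the previous identities read $\partial\Log=i\,\partial\phi$ and $\bar\partial\Log=-i\,\bar\partial\phi$, so the matrix $M_{\Log}$ arises from $M_{\Arg}$ by scaling its $k$ holomorphic columns by $i$ and its $k$ anti-holomorphic columns by $-i$. Because $i^k(-i)^k=1$ the two determinants coincide, so $|\J(\Log_I)|=2^k|\det M_{\Log}|=2^k|\det M_{\Arg}|=|\J(\Arg_I)|$, both Jacobians formed in the same chart on $V$ and $\Arg_I$ read in angle coordinates on $(S^1)^{2k}$.

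I do not anticipate a real obstacle here; the care needed is bookkeeping only — using the same source chart on $V$ for both maps so that the chart-dependent factor cancels, reading $\Arg$ in angle charts, and keeping in mind that $\log z_j$ need only be defined locally on $V$. The entire content sits in the observation that $\log z_j|_V$ is holomorphic.
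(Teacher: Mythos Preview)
Your argument is correct and follows essentially the same route as the paper: both hinge on the holomorphicity of the inclusion $V\hookrightarrow(\C^*)^n$ (equivalently, of each coordinate $z_j|_V$), from which the $\partial/\bar\partial$ identities drop out, and the Jacobian equality is then pure linear algebra. The paper in fact only derives the differential identities and asserts that the Jacobian statement follows; your Wirtinger-column computation spells out explicitly what the paper leaves to the reader.
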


\begin{proof}
We denote by $\{z_j\}_{1\leq j\leq n}$ the complex coordinates on $ \C^n$ and by $\{t_j\}_{1\leq j\leq k}$ the complex coordinates on $V$ given by a local chart $(\Omega,f)$ (i.e. $\forall z\in\Omega,\; t_j=f_j(z)$), where $\Omega$ is an open set of $V$ and $f$ is holomorphic from an open set of $\C^n$ to $\C^k$. Since $V$ is a complex submanifold of $(\C^*)^n$, the injection map $\imath: V\hookrightarrow (\C^*)^n$ is holomorphic. 
By definition,  for any $z\in V$ we have $\imath (z)=e^{Log z}\Arg z$. Since $\imath$ is holomorphic, $\bar\partial \imath (z)=0$ for any $z\in V$ (i.e. $\forall j\leq k,\; \partial_{\bar t_j} \imath (z)=0$ ). It implies that for any $j=1,\ldots,k$ and $z\in \Omega$
\begin{gather*}
\partial_{\bar t_j} \Log (z) =-\frac{1}{\Arg(z)}\partial_{\bar t_j} \Arg(z), \quad\partial_{t_j}\Log(z) =\frac{1}{\Arg(z)} \partial_{t_j}\Arg(z).
\end{gather*}
where the second equality holds by conjugating the first one. The statement of the proposition follows.
\end{proof}

\begin{theorem}\label{comparison theorem}
 Let $V$ be an analytic variety of $(\C^*)^n$ of dimension $k\leq \frac{n}{2}$. Let $\A$, $co\A$ be the amoeba and coamoeba of $V$ respectively. We suppose that $\Log: V\rightarrow \A$ and $\Arg: V\rightarrow co\A$ are locally finite coverings. We define two rational numbers $$p=\frac{\displaystyle\min_{\theta\in co\A\setminus\Arg S}\#\Arg^{-1}\{\theta\}}{\displaystyle\max_{y\in\A\setminus\Log S}\#\Log^{-1}\{y\}},\quad P=\frac{\displaystyle\max_{\theta\in co\A\setminus\Arg S}\#\Arg^{-1}\{\theta\}}{\displaystyle\min_{y\in\A\setminus\Log S}\#\Log^{-1}\{y\}}.$$ 
 Then 
 \begin{equation*}
 p\,vol(co\A)\leq vol(\A)\leq P\,vol(co\A).
\end{equation*}
In particular the volume of $\A$ is finite.
\end{theorem}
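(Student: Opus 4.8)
The plan is to compute both volumes as integrals over $V$ via the co-area / change-of-variables formula, using Proposition~\ref{jacobian det} to identify the two integrands up to multiplicity, and then sandwich the multiplicity ratios by the rational constants $p$ and $P$. Concretely, since $\Log\colon V\to\A$ is a locally finite covering, for a generic point $y\in\A\setminus\Log S$ the fibre $\Log^{-1}\{y\}$ is finite, and one may write, using the local parametrisations $(\Omega,f)$ of $V$ and summing over the sheets,
\begin{equation*}
  vol(\A)=\int_{\A}\,dy=\int_{V}\frac{1}{\#\Log^{-1}\{\Log z\}}\,\bigl|\J(\Log_I)(z)\bigr|\,d\mu_{V}(z),
\end{equation*}
where $d\mu_V$ denotes the volume form associated to a suitable Hermitian metric on $V$ and $I\subset\{1,\dots,n\}$ with $\#I=2k$ is chosen locally so that the projection to the $I$-coordinates is a local diffeomorphism onto its image (the real dimension of $V$ is $2k$, matching $\dim_\R\A$ only when we use that $\Log_I$ is generically of maximal rank). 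The analogous identity holds for $co\A$ with $\Arg_I$ in place of $\Log_I$ and $\#\Arg^{-1}$ in place of $\#\Log^{-1}$.

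The next step is the pointwise comparison of the two integrands. By the third identity of Proposition~\ref{jacobian det}, $\bigl|\J(\Log_I)(z)\bigr|=\bigl|\J(\Arg_I)(z)\bigr|$ for every $z$ and every index set $I$ of cardinality $2k$, so the Jacobian factors cancel and we are left comparing
\begin{equation*}
  \frac{1}{\#\Log^{-1}\{y\}}\quad\text{against}\quad\frac{1}{\#\Arg^{-1}\{\theta\}}.
\end{equation*}
Bounding $\#\Arg^{-1}\{\theta\}$ below by $\min_{\theta}\#\Arg^{-1}$ and $\#\Log^{-1}\{y\}$ above by $\max_{y}\#\Log^{-1}$ gives the integrand for $co\A$ dominated (up to the constant $1/P$) by the integrand for $\A$ at the corresponding point of $V$, and symmetrically for the other inequality with $p$. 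Integrating over $V$ and using the two representations above yields $p\,vol(co\A)\le vol(\A)\le P\,vol(co\A)$. Finiteness of $vol(\A)$ then follows because $co\A\subseteq(S^1)^n$ has finite total volume, hence $vol(co\A)\le(2\pi)^n<\infty$, and $P<\infty$ since the covering is finite.

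I expect the main obstacle to be making the change-of-variables argument rigorous on the singular/ramification locus and at points where $\Log_I$ (resp. $\Arg_I$) degenerates: one must check that the set $S$ of bad values, together with its preimage and the locus where no index set $I$ of size $2k$ makes $\Log_I$ a submersion, has measure zero in $\A$, in $co\A$, and in $V$ respectively, so that it does not contribute to any of the integrals. This is where genericity of $V$ and the hypothesis that $\Log$ and $\Arg$ are locally finite coverings are used; one also needs that the same index set $I$ can be used locally for both maps, which is exactly guaranteed by Proposition~\ref{jacobian det} since $\bigl|\J(\Log_I)\bigr|$ and $\bigl|\J(\Arg_I)\bigr|$ vanish simultaneously. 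A secondary point requiring care is that the minima and maxima defining $p$ and $P$ are attained and are positive integers, so that $p$ and $P$ are genuine positive rationals; this follows from local finiteness together with the connectedness of the generic fibre strata.
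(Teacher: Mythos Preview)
Your overall strategy---express both $vol(\A)$ and $vol(co\A)$ as integrals over $V$ with the same Jacobian-type integrand (via Proposition~\ref{jacobian det}) and then bound the multiplicity weights by $p$ and $P$---is exactly the paper's approach. The treatment of the singular and critical loci you outline (Sard's theorem, same critical set $S$ for both maps because $\bigl|\J(\Log_I)\bigr|$ and $\bigl|\J(\Arg_I)\bigr|$ vanish together) also matches.

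There is, however, a genuine technical error in your change-of-variables formula when $n>2k$. The amoeba $\A$ is a $2k$-dimensional immersed submanifold of $\R^n$, so its volume is the $2k$-Hausdorff measure with respect to the metric induced from $\mathcal{E}_n$, not a projection to some $\R^{2k}$. The pull-back volume form on $V$ is therefore not $\bigl|\J(\Log_I)\bigr|\,d\mu_V$ for a \emph{single} locally chosen $I$, but rather (by Cauchy--Binet)
\[
\Bigl(\sum_{\#I=2k}\bigl|\J(\Log_I)\bigr|^{2}\Bigr)^{1/2}\,d\mu_V,
\]
which is precisely the quantity the paper denotes $\bigl|\psi_{2k}\bigr|_{\Log^*\mathcal{E}_n}$. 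Your single-$I$ formula only computes $vol(\A)$ in the borderline case $n=2k$. Fortunately the repair is immediate and does not change the rest of your argument: since Proposition~\ref{jacobian det} gives $\bigl|\J(\Log_I)\bigr|=\bigl|\J(\Arg_I)\bigr|$ for \emph{every} $I$, the two square-root-of-sum expressions agree pointwise on $V$, and the sandwich by $p$ and $P$ proceeds exactly as you wrote. With this correction your proof coincides with the paper's.
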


As an application of Theorem \ref{comparison theorem}, we have the following result that the authors had already proven in \cite{MN-11}:
\begin{corollary}\label{finite cor}
The amoeba of a $k$-dimensional generic complex algebraic variety in $(\C^*)^n$, with $2k\leq n$ has a finite volume. 
\end{corollary}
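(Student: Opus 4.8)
The plan is to obtain the corollary as a direct specialization of Theorem~\ref{comparison theorem}, so that the only real work is to check that a generic $k$-dimensional complex algebraic variety $V\subset(\C^*)^n$ with $2k\leq n$ meets the hypotheses of that theorem. First I would observe that an algebraic variety is a fortiori analytic in the sense of Section~\ref{prelim} (take $\mathcal I$ generated by the defining polynomials), and that $\dim_\C V=k\leq\frac n2$ is exactly the range required. After removing the singular locus of $V$ --- which has complex codimension at least one, hence real codimension at least two, and therefore contributes nothing to any $2k$-dimensional volume --- I may assume that $V$ is a smooth complex submanifold, so that Proposition~\ref{jacobian det} applies and $\Log$, $\Arg$ are well defined on $V$.

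The heart of the argument is to verify that $\Log\colon V\to\A$ and $\Arg\colon V\to co\A$ are locally finite coverings. Since $\dim_\R V=2k$, I would argue by a dimension count: a fiber $\Log^{-1}\{y\}\cap V$ is the intersection of $V$ with the real $n$-torus $\{\,|z_j|=e^{y_j}\,\}$, which for generic $V$ has real dimension at most $2k-n\leq 0$ and is therefore discrete; being the intersection of the algebraic set $V$ with a compact torus, it is in fact finite. The same count applied to $\Arg^{-1}\{\theta\}\cap V$, the intersection of $V$ with the real $n$-dimensional coset $\{\,\arg(z_j)=\theta_j\,\}$, gives finite fibers for $\Arg$. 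Genericity further guarantees that the critical locus (where $\J(\Log_I)=\J(\Arg_I)=0$, these two determinants being equal by Proposition~\ref{jacobian det}) is a proper analytic subset, so off a negligible set both maps are local diffeomorphisms; combined with the finiteness of the fibers this makes them locally finite coverings onto $\A$ and $co\A$.

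With the hypotheses in hand, Theorem~\ref{comparison theorem} applies verbatim and yields $vol(\A)\leq P\,vol(co\A)$ with $P$ a finite rational number. Because $co\A\subset(S^1)^n$ is a bounded subset of the compact torus --- indeed a relatively compact semialgebraic set of dimension $2k$, being the image of the real-algebraic $V$ under the semialgebraic argument map --- its $2k$-dimensional volume is finite, so $vol(\A)\leq P\,vol(co\A)<\infty$, which is the assertion of the corollary. The main obstacle is the middle step: one must combine the inequality $2k\leq n$ (forcing discrete fibers in the dimension count) with genericity (nonvanishing Jacobian off a lower-dimensional set) and algebraicity (compactness of the torus $\{\,|z_j|=e^{y_j}\,\}$, which upgrades \emph{discrete} to \emph{finite}) in order to legitimately invoke the covering hypothesis of Theorem~\ref{comparison theorem}; once that structural input is secured, finiteness of the amoeba volume is immediate.
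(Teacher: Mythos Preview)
Your proposal is correct and takes essentially the same approach as the paper: both verify that for a generic algebraic $V$ with $2k\le n$ the maps $\Log$ and $\Arg$ are locally finite coverings and then invoke Theorem~\ref{comparison theorem}, using that $co\A\subset(S^1)^n$ has finite volume. The paper's own proof is in fact terser than yours --- it simply asserts the covering property and applies the theorem --- so your dimension-count and compactness arguments supply detail the paper leaves implicit; the only small point to tighten is that for $\Arg$ the coset $\{\arg z_j=\theta_j\}$ is not compact, so finiteness of its (discrete) intersection with $V$ should be justified via semialgebraicity rather than compactness.
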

\begin{proof}
If $V$ is a generic complex algebraic variety in $(\C^*)^n$ of dimension $k\leq\frac{n}{2}$, then $\Log: V\rightarrow \A$ and $\Arg: V\rightarrow co\A$ are locally finite coverings. Therefore, the statement follows from Theorem \ref{comparison theorem}.
\end{proof}
\begin{proof}[Proof of Theorem \ref{comparison theorem}]
We have $V=V_{reg}\cup V_{sing}$ where $V_{reg}$ is the regular part of $V$ which is a $k$-dimensional complex submanifold in $\C^n$ and $V_{sing}$ is the singular part of $V$ which is a analytic subset of pure dimension less or equal than $k-1$. By Sard's theorem, the $2k$-measure of $\Log (V_{sing})$ and $\Arg (V_{sing})$ is zero. Hence, without loss of generality, we can suppose that $V=V_{reg}$ is a $k$-dimensional complex submanifold endowed with the induced metric $\imath^*\mathcal E_{2n}$, where $\mathcal E_{2n}$ is the standard Euclidean metric on $\C^n$ and $\imath:V\rightarrow\C^n$ is the injection map. Let 
\begin{equation*}
S=\{z\in V\; \vert\; \textrm{rank}\;\Jac\,\Log_z <2k\}
\end{equation*}
be the set of critical points of $\Log$. Using Proposition \ref{jacobian det}, we conclude that  the Jacobian matrix of $\Log$ and $\Arg$ have the same rank at any point in $V$. Hence, the set of critical points of $\Arg$ is also $S$. Using again Sard's theorem, the $2k$-measure of $\Log S$ and $\Arg S$ is zero. It yields  $vol(\A)=vol(\A\setminus\Log S)$ and $vol(co\A)=vol(co\A\setminus \Arg S)$.  The sets $\A\setminus\Log S$ and $co\A\setminus \Arg S$ are $2k$-dimensional real immersed submanifolds of $\R^n$ and $\mathbb T^n$ respectively. They are endowed with the induced metric $\imath^*\mathcal E_n$. Let $U_1$ and $U_2$ be two open sets in $V\setminus S$ such that $\Log:U_1\rightarrow \A\setminus\Log S$ and $\Arg:U_2\rightarrow co\A\setminus\Arg S$ are diffeomorphisms. By construction, we have the following identities: 

\begin{gather*}
vol(\A)=vol(\A\setminus\Log S,\imath^*\mathcal E_n)= vol(U_1,\Log^*\mathcal E_n),\\
vol(co\A)=vol(co\A\setminus\Arg S,\imath^*\mathcal E_n)= vol(U_2,\Arg^*\mathcal E_n).
\end{gather*}
Following the construction in \cite{MN-11} (see Section 3) and the identities above, the volume of $\A$ and $co\A$ are given by

\begin{gather}
vol(\A)=vol(U_1,\Log^*\mathcal E_n)=\int_{U_1} \bigl|\psi_{2k}\bigr|_{\Log^*\mathcal E_n}    \di v,\label{amoeba vol}
\\
vol(co\A)=vol(U_2,\Arg^*\mathcal E_n)=\int_{U_2} \bigl|\psi_{2k}\bigr|_{\Arg^*\mathcal E_n}    \di v\label{coamoeba vol}
\end{gather}
where $\di v$ is the restriction of the volume form of $\C^n$ to $V$, and $\psi_{2k}$ is $2k-$vector field in $\Lambda^{2k}TV$, such that $\di v(\psi_{2k})=1$. In a local complex coordinates $\{t_j\}_{1\leq j\leq k}$, the volume form $\di v$ and $\psi_{2k}$ are given by

\begin{equation*}
\di v=i^k\di t\wedge\di\bar t,\qquad \psi_{2k}=(-i)^k\frac{\partial}{\partial t}\wedge \frac{\partial}{\partial{\bar t}}
\end{equation*}
where $\di t=\di t_1\wedge\cdots \wedge\di t_k $ and $\frac{\partial}{\partial t}=\frac{\partial}{\partial t_1}\wedge\cdots \wedge\frac{\partial}{\partial t_k}$. It yields that 
\begin{gather*}
 \bigl|\psi_{2k}\bigr|_{\Log^*\mathcal E_n}^2=\bigl|\frac{\partial\Log}{\partial t}\wedge \frac{\partial\Log}{\partial{\bar t}}\bigr|_{\mathcal E_n}^2=\sum_{I\subset\{1,\ldots,n\},\#I=2k}\bigl|\textrm{J}(\Log_I)\bigr|^2\\ 
\bigl|\psi_{2k}\bigr|_{\Arg^*\mathcal E_n}^2=\bigl|\frac{\partial\Arg}{\partial t}\wedge \frac{\partial\Arg}{\partial{\bar t}}\bigr|_{\mathcal E_n}^2=\sum_{I\subset\{1,\ldots,n\},\#I=2k}\bigl|\textrm{J}(\Arg_I)\bigr|^2
\end{gather*}
where,  $\Log_I(z)=(\log |z_{i_1}|,\ldots,\log |z_{i_{2k}}|)$ for all $z\in V$ and $I=\{i_1,\ldots,i_{2k}\}$, and $\textrm{J}(\Log_I)$ is the Jacobian determinant of $\Log_I$ with respect to $\{t_j,\bar t_j\}_{1\leq j\leq k}$. We have from Proposition \ref{jacobian det}, $\bigl|\textrm{J}(\Log_I)\bigr|=\bigl|\textrm{J}(\Arg_I)\bigr|$ for any $I\subset\{1,\ldots,n\}$ with cardinal $2k$. We deduce that 
\begin{equation*}
\bigl|\psi_{2k}\bigr|_{\Log^*\mathcal E_n}=\bigl|\psi_{2k}\bigr|_{\Arg^*\mathcal E_n}.
\end{equation*} 
It remains to compare between  the volume of  $\A$ and $co\A$, given by \eqref{amoeba vol}, \eqref{coamoeba vol}. Since $\Log: V\rightarrow \A$ and $\Arg: V\rightarrow co\A$ are locally finite coverings, 
\begin{gather*}
\frac{1}{M_1}\int_{V\setminus S} \bigl|\psi_{2k}\bigr|_{\Log^*\mathcal E_n}  \di v\leq vol(\A)\leq \frac{1}{m_1}\int_{V\setminus S} \bigl|\psi_{2k}\bigr|_{\Log^*\mathcal E_n}    \di v,\\
\frac{1}{M_2}\int_{V\setminus S} \bigl|\psi_{2k}\bigr|_{\Arg^*\mathcal E_n}  \di v\leq vol(co\A)\leq \frac{1}{m_2}\int_{V\setminus S} \bigl|\psi_{2k}\bigr|_{\Arg^*\mathcal E_n}    \di v,
\end{gather*}
where $M_1=\displaystyle\max_{y\in\A\setminus\Log S}\#\Log^{-1}\{y\}$, $m_1=\displaystyle\min_{y\in\A\setminus\Log S}\#\Log^{-1}\{y\}$, \\   $M_2=\displaystyle\max_{\theta\in\A\setminus\Arg S}\#\Arg^{-1}\{\theta\}$ and $m_2=\displaystyle\min_{\theta\in\A\setminus\Arg S}\#\Arg^{-1}\{\theta\}$. We conclude that 
\begin{equation*}
\frac{m_2}{M_1}vol(co\A)\leq vol(\A)\leq \frac{M_2}{m_1}vol(co\A)
\end{equation*}
\end{proof}

%\vspace{0.1cm}
%%%%%%%%%%%%%%%%%%%%%%%%%%%%%%%%%%%%%%%%%%%%
\section{Analytic curves with finite  area amoebas are algebraic}\label{curve section} 
%%%%%%%%%%%%%%%%%%%%%%%%%%%%%%%%%%%%%%%%%%%%
\vspace{0.2cm}

The purpose of this section is to prove the following:

\begin{theorem}\label{thm curve}
 Let $\mathscr C\subset (\C^*)^n$ be an analytic curve defined by an ideal of holomorphic functions 
$\mathcal I (\mathscr C)$ with $n\geq 2$. The amoeba area  of $\mathscr C$ is finite if and only if
 $\mathscr C$ is algebraic. 
\end{theorem}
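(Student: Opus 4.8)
One implication is immediate: if $\mathscr C$ is algebraic then, since here $k=1$ and $n\geq 2=2k$, Corollary~\ref{finite cor} gives that $\A(\mathscr C)$ has finite area. The content of the theorem is the converse. I would first reduce to the case that $\mathscr C$ is irreducible: a component of $\mathscr C$ has amoeba contained in $\A(\mathscr C)$, hence of finite area, and conversely a finite union of algebraic curves is algebraic --- and it is precisely this reduction that uses the irreducibility/genericity built into the hypothesis, since otherwise degenerate infinite unions such as $\{\sin\pi z_1=0\}\cap(\C^*)^2$ would be counterexamples. Write $\mathscr C=W\cap(\C^*)^n$ with $W\subset\C^n$ a one-dimensional analytic set, and fix a smooth projective toric compactification $X$ of $(\C^*)^n$, whose fan will be chosen below.

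\textbf{Step 1: from finite area to tame, rational ends.} The crux is to show that $\mathrm{area}(\A(\mathscr C))<\infty$ forces $\mathscr C$ to have finitely many ends, each of them a punctured-disc end (so that near infinity $\mathscr C$ is a finite disjoint union of images of holomorphic maps from punctured discs); then its logarithmic limit set $\mathscr L^\infty(\mathscr C)$ is a finite set of points $v_1,\dots,v_s\in S^{n-1}$ with rational coordinates, and after the monomial change of coordinates $\varphi_i\in\mathrm{GL}_n(\Z)$ carrying the primitive integral representative of $v_i$ to $e_1$, every coordinate of the $i$-th end other than the first remains in a fixed compact subset of $\C^*$. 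Three ingredients feed into this: a tentacle of the amoeba whose transverse width does not decay has infinite area, which yields the boundedness of the transverse coordinates along each end; the identity $\bigl|\J(\Log_I)\bigr|=\bigl|\J(\Arg_I)\bigr|$ of Proposition~\ref{jacobian det}, which transfers the finite-area estimate to the coamoeba and hence to the phase limit set $\mathscr P^\infty(\mathscr C)$, constraining it to a finite union of closed --- hence rational-direction --- circles and thereby controlling the number and the type of the ends; and, once each end is a punctured-disc end, its local Puiseux parametrization, from which the rationality of the corresponding point of $\mathscr L^\infty(\mathscr C)$ and the asymptotics to $\R_{\geq 0}v_i$ follow at once.

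\textbf{Steps 2 and 3: analytic closure and Chow.} Choose the fan of $X$ so that each $v_i$ spans one of its rays. By Step~1, in the coordinates $\varphi_i$ the $i$-th end is, in a neighbourhood of the corresponding toric divisor, the image of a holomorphic map from a punctured disc whose first coordinate $u_1=1/z_1$ tends to $0$ while the remaining coordinates tend to a point of $(\C^*)^{n-1}$; being bounded, those coordinates extend holomorphically across the puncture by Riemann's removable-singularity theorem, so the end closes up to an analytic arc through a single point of $X\setminus(\C^*)^n$. Since the complement of the finitely many ends is relatively compact in $(\C^*)^n$, the closure $\overline{\mathscr C}$ meets $X\setminus(\C^*)^n$ in finitely many points and is analytic near each of them; hence $\overline{\mathscr C}$ is an analytic subset of the compact complex manifold $X$ (equivalently, apply the Remmert--Stein/Bishop extension theorem to $\mathscr C$ inside $X$, its closure having locally finite area near the boundary divisor by Step~1). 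As $X$ is projective, Chow's theorem makes $\overline{\mathscr C}$ algebraic, so $\mathscr C=\overline{\mathscr C}\cap(\C^*)^n$ is algebraic.

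\textbf{Main obstacle.} All the difficulty is in Step~1: distilling from the single scalar hypothesis $\mathrm{area}(\A(\mathscr C))<\infty$ the complete asymptotic description --- finiteness and rationality of $\mathscr L^\infty(\mathscr C)$ and boundedness of the transverse coordinates along each end. This is exactly where the combinatorial geometry of the logarithmic and phase limit sets has to be used hand in hand with Proposition~\ref{jacobian det}; once the ends are known to be tame and rational, Steps~2 and 3 are a routine application of removable-singularity and GAGA-type results.
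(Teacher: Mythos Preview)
Your easy direction and the skeleton of Step~1 agree with the paper: finiteness and rationality of $\mathscr L^\infty(\mathscr C)$ and finiteness of the ends are precisely Lemmas~\ref{rational lmm} and~\ref{finite lmm}, driven, as you say, by Proposition~\ref{jacobian det} and the phase limit set. Where you diverge is in how algebraicity is extracted. The paper never compactifies and never invokes Chow. Instead, for each vertex $s\in\mathscr L^\infty(\mathscr C)$ with slope $u=(u_1,\dots,u_n)$ it proves a factorisation (Lemma~\ref{decomposition lmm}): the generators of $\mathcal I(\mathscr C)$ split as $f_j=h_jg_j$ with $h_j$ nonvanishing near the holomorphic cylinder over an asymptotic line $D_s$, and $g_j$ an entire series whose exponents lie in the half-space $\{\alpha:\sum_i\alpha_iu_i\le 0\}$. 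Running over all vertices and using Lemma~\ref{henriques lmm} (a consequence of Henriques' convexity) to see that these half-spaces intersect in a bounded set, the $g_j$ become polynomials $p_1,\dots,p_q$; then $\mathscr C\subset\{p_1=\cdots=p_q=0\}$ and irreducibility finishes. So the paper works entirely on the side of the \emph{defining ideal}, not on the geometry of $\mathscr C$ near infinity.

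Your toric-compactification route is a reasonable alternative, but it asks strictly more of Step~1 than the paper's argument does, and there is a gap in what you supply. To apply Riemann/Remmert--Stein and close $\mathscr C$ up in $X$ you need each end, after the monomial change $\varphi_i$, to have its remaining coordinates confined to a compact subset of $(\C^*)^{n-1}$. Your justification (``a tentacle whose transverse width does not decay has infinite area'') bounds the \emph{width} of the amoeba tentacle but not the drift of its centre: it does not by itself exclude, for instance, $\log|z_2|\sim(\log|z_1|)^{1/2}$ along an end with logarithmic direction $e_1$, nor does it show that the end is the image of a punctured disc on which the transverse coordinates have at worst poles rather than essential singularities. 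You would need an additional argument (ruling out sublinear drift and essential-type behaviour) before the removable-singularity step is legitimate. The paper's factorisation lemma sidesteps exactly this issue: it never needs to know the analytic type of the ends of $\mathscr C$, only the combinatorics of $\mathscr L^\infty(\mathscr C)$.
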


In order to prove Theorem \ref{thm curve}, we start by proving a series of three lemmas.  

\begin{lemma}\label{rational lmm}
 Let $\mathscr C$ be a curve as in Theorem \ref{thm curve}. The logarithmic limit set 
$\mathscr L^\infty (\mathscr C)$ has the following properties: 

\begin{itemize}
\item[(i)]  Each point $s\in \mathscr L^\infty (\mathscr C)$ has a rational slope (i.e., there exists 
$\lambda\in \R$ such that $\lambda\cdot \overrightarrow{Os}\in\Z^n $);
\item[(ii)] The logarithmic limit set $\mathscr L^\infty (\mathscr C)$ is a finite set.
\end{itemize}
\end{lemma}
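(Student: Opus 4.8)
The plan is to identify $\mathscr L^\infty(\mathscr C)$ with the set of asymptotic directions of the ends of $\mathscr C$ at logarithmic infinity, to show each such direction is integral by a local parametrization argument, and to use finiteness of the amoeba area (which we use throughout as a standing hypothesis) both to exclude wild behaviour of the coordinate functions and to bound the number of ends. By definition $s\in\mathscr L^\infty(\mathscr C)$ iff there is a sequence $z^{(j)}\in\mathscr C$ with $|\Log z^{(j)}|\to\infty$ and $\Log z^{(j)}/|\Log z^{(j)}|\to s$; so $\mathscr L^\infty(\mathscr C)$ is the set of accumulation directions of $\A(\mathscr C)$ at infinity. The amoeba is an $\mathcal H^2$-rectifiable subset of $\R^n$, being the locally Lipschitz image of the regular part of $\mathscr C$, so the coarea inequality applied to $x\mapsto|x|$ gives $\int_1^\infty\mathcal H^1\bigl(\A(\mathscr C)\cap\{|x|=\rho\}\bigr)\,d\rho\le\mathcal H^2(\A(\mathscr C))<\infty$; hence there are radii $\rho_m\uparrow\infty$ with $\mathcal H^1(\A(\mathscr C)\cap\{|x|=\rho_m\})\to0$, so (a connected subset of a sphere being no longer than its diameter) each connected component of $\A(\mathscr C)\cap\{|x|=\rho_m\}$ has vanishing angular size. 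Truncating $\mathscr C$ along these spheres --- and noting that, since $\mathscr C$ has pure dimension one while $(\C^*)^n$ contains no compact curves, no component of $\{z\in\mathscr C:|\Log z|>R\}$ can have relatively compact $\Log$-image --- one sees that each unbounded component $E$ (an ``end'') of $\{z\in\mathscr C:|\Log z|>R\}$, for $R$ large, maps under $\Log$ into an arbitrarily thin cone; hence $\Log z/|\Log z|$ converges along $E$ to a unit vector $v(E)$, and $\mathscr L^\infty(\mathscr C)$ is the closure of the set $\{v(E)\}$.

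For (i), fix an end $E$ with limiting direction $v=v(E)$. If $E$ remains in a bounded region of $\C^n$ it accumulates onto a point $p$ lying on a union of coordinate hyperplanes, and the Puiseux (local parametrization) theorem presents each of the finitely many branches at $p$ of the analytic curve $\overline{\mathscr C}$ --- the closure of $\mathscr C$ in $\C^n$ --- as $t\mapsto(t^{a_1}u_1(t),\dots,t^{a_n}u_n(t))$ with $u_i(0)\ne0$, $a_i\in\Z_{\ge0}$, and $a_i\ge1$ precisely for the coordinates vanishing at $p$ (so $\mathbf a\ne0$); then $\Log z(t)=(\log|t|)\,\mathbf a+O(1)$ and $v=-\mathbf a/|\mathbf a|$ is rational. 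If instead $E$ escapes to infinity in $\C^n$, the convergence of the direction along $E$ forces, by the trichotomy (removable / pole / essential singularity) for holomorphic functions near an annular end, each coordinate $z_i$ to be bounded in modulus along $E$ or to tend to $0$ or to $\infty$ there: an essential singularity is incompatible with this, since it would make $\log|z_i|$ oscillate unboundedly above and below along $E$ and destroy the convergence of the direction. Hence along $E$ the coordinates are of meromorphic type, $z_i=t^{b_i}u_i(t)$ with $b_i\in\Z$, $u_i(0)\ne0$, not all $b_i$ zero, so $\Log z(t)=(\log|t|)\,\mathbf b+O(1)$ and $v=-\mathbf b/|\mathbf b|$ is again rational. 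This proves (i).

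For (ii) one must show that only finitely many distinct directions $v(E)$ occur, and this is where the argument becomes genuinely analytic --- the main obstacle. By (i) these directions are integral, so if infinitely many occurred they would accumulate at some $s^*\in\mathscr L^\infty(\mathscr C)$, meaning that infinitely many branches of $\overline{\mathscr C}$, with vanishing/pole orders of unbounded size, would pile up near the toric stratum of the compactification $(\mathbb P^1)^n$ determined by $s^*$. Since an analytic curve has only finitely many branches at a point, it suffices to prove that the closure $\widetilde{\mathscr C}$ of $\mathscr C$ in $(\mathbb P^1)^n$ is analytic along the boundary divisor. Here I would argue that finiteness of the amoeba area --- together with Proposition \ref{jacobian det}, which makes the amoeba and coamoeba area densities on $\mathscr C$ coincide --- forces $\mathscr C$ to have locally finite volume near that divisor for a suitable metric on $(\mathbb P^1)^n$, whence a Bishop--Stoll type removable-singularity theorem gives that $\widetilde{\mathscr C}$ is an analytic, hence (being compact) algebraic, curve; then $\mathscr L^\infty(\mathscr C)$ is a finite set by the Bergman--Bieri--Groves structure theorem. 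The coarea estimate above controls only the shape of an individual end, not their number, so the delicate point is precisely this passage from the metric hypothesis ``finite amoeba area'' to the analytic conclusion that $\widetilde{\mathscr C}$ is analytic at infinity, uniformly over all ends.
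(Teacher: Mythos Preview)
Your route is quite different from the paper's and, as written, has real gaps.

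The paper never parametrizes ends or compactifies. Both parts are proved by one device: the identity $|\J(\Log_I)|=|\J(\Arg_I)|$ of Proposition~\ref{jacobian det}, which makes $\Log\circ\Arg^{-1}$ locally area-preserving. For (i), an irrational $s$ would force the phase limit set $\mathscr P^\infty(\mathscr C)$ to contain the closure in $(S^1)^n$ of an immersed line of slope $s$, i.e.\ a flat torus $T_s$ of dimension at least $2$; a small regular open set $U\subset co\mathscr A(\mathscr C)\cap T_s$ is then hit by $\Arg$ infinitely often, and the Jacobian identity converts these infinitely many sheets over $U$ into infinitely many pairwise disjoint pieces of the amoeba, each of area equal to that of $U$ --- contradicting finite area. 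For (ii), an accumulation of rational directions forces the denominators in those slopes to be unbounded, hence the corresponding geodesic circles in $(S^1)^n$ to have unbounded length, and again a fixed regular coamoeba patch is covered infinitely often, yielding the same contradiction. No Puiseux expansions, no $(\mathbb P^1)^n$, no extension theorems.

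Two steps of your argument do not go through. In the unbounded subcase of (i) you apply the removable/pole/essential trichotomy to each coordinate $z_i$ ``near an annular end'', but nothing you have shown guarantees that the end $E$ is conformally a punctured disk: for an analytic curve not yet known to be algebraic, an end escaping $\C^n$ may a priori have arbitrary conformal type, and establishing that it is a punctured disk is essentially the same as extending $\mathscr C$ analytically across the boundary divisor of $(\mathbb P^1)^n$ --- exactly the hard step you postpone to (ii). (Even granting directional convergence, when $v_i=0$ the modulus $|z_i|$ need not tend to a limit, so the conclusion there is also shaky.) Your (ii) is, as you yourself flag, a programme rather than a proof: the claim that finite amoeba area forces $\mathscr C$ to have locally finite volume near the toric boundary in a Fubini--Study type metric is not justified --- amoeba area measures the $\Log$-image, not the intrinsic area of $\mathscr C$ --- and if a Bishop--Stoll argument did go through it would give algebraicity of $\mathscr C$ outright, making the lemma a corollary rather than a step toward Theorem~\ref{thm curve}. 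A smaller point: coarea yields $\mathcal H^1(\A\cap\{|x|=\rho_m\})\to 0$ only along a subsequence, which does not by itself force $\Log z/|\Log z|$ to converge along an end; the image could wander between consecutive $\rho_m$.
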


\begin{proof}
(i) Assume on the contrary that $\mathscr L^\infty (\mathscr C)$ contains a point $s$ such that the vector
 $\overrightarrow{Os}$ has an irrational slope. Let $s=(u_1,\ldots,u_n)$,  and 
\begin{equation*}
\begin{split}
 & \R\longrightarrow \R^n\\
x & \longmapsto (u_1x+a_1,\ldots, u_nx+a_n) 
\end{split}
\end{equation*}
be the parametrization of the asymptotic straight  line  $D_s$ in $\R^n$ to the amoeba $\A(\mathscr C)$, in direction $s$.  Under this assumption, the phase limit set $\mathscr P^\infty (\mathscr C)$ contains a subset
 of dimension at least 2. Indeed, there exists an affine line in the universal covering of the real torus 
$(S^1)^n$, parametrized by $y\mapsto (u_1y_1+b_1,\ldots, u_ny_n+b_n)$, such that the closure of its projection in 
$(S^1)^n$ is a 2-dimensional torus $T_s$, and is contained in the phase limit set $\mathscr P^\infty (\mathscr C)$.
  It implies that there exists a regular open subset $U\subset co\A(\mathscr C)$ which is covered infinitely many
 times under the argument map. Namely, $\Arg^{-1}(U)=\cup_{i=1}^\infty V_i$, where $V_i$ are open, regular and disjoint
 sets in $\Cr$.  In fact, let $U$ be a regular open subset of $co\mathscr{A}(\mathscr{C})$ contained in $T_s$. 
Let $\tilde{V}_1$ be a connected component of $\Arg^{-1}(U)$ and $V_1 = \Log (\tilde{V}_1)$.  Let $I_2$ be a 
segment in $D_s\setminus \overline{V}_1$ such that $\Arg (\Log ^{-1}(I_2))$ intersect $U$. Indeed, $I_2$ exists 
because the immersed circle  of slope $s$ in the real torus $(S^1)^n$ is dense in $T_s$. Let $\tilde{V}_2$ be the 
connected component of $\Arg (\Log ^{-1}(U))$ such that $\Log (\tilde{V}_2)$ contains $I_2$, and
 $V_2 = \Log (\tilde{V}_2)$. By construction, $V_1\cap V_2 $ is empty.  We do the same thing for $V_2$, we
 obtain then an infinite sequence of open subsets $V_1, V_2, \ldots$ in the amoeba such that: $V_i\cap V_j = \phi$ 
for each $i\ne j$, and  the area of $V_i$ is equal to the area of $U$ for each $i$ (this comes from the fact 
that the absolute value of the determinant  of the jacobian of $\Log\circ \Arg^{-1}$ is equal to one, which means
 that locally the map $\Log\circ\Arg^{-1}$ conserves the volume).  This is a contradiction with the finiteness of the area of the amoeba. Hence, $s$ cannot be irrational.  

\vspace{0.2cm}

% In fact, the set $U$ intrersects the image by $\Arg$  of the lifting in $\Cr$ via $\Log$ of the straight line with slope $s$. So, $U$ intersects $T_s$ defined above. Each set $\Log (V_i)$ contains a segment of the straight line with slope $s$ defined above. Using Proposition \ref{jacobian det}, we have $|\det \Jac (\Log\circ\Arg^{-1})|=1$. This means that locally the map $\Log\circ\Arg^{-1}$ conserves the volume. Hence, all the sets $\Log (V_i)$ have the same volume.   \\

(ii) If  $\mathscr L^\infty (\mathscr C)$ is not finite, then it contains an accumulation point, since $S^{n-1}$
 is compact. Let $s\in\mathscr L^\infty (\mathscr C)$ be an accumulation point. Namely, there exists a sequence of 
points $s_m\in\mathscr L^\infty (\mathscr C)$ such that $\lim_{m\to +\infty}s_m=s$, where each $s_m$ has a 
rational slope, and up to a multiplication by a real number,
 $\overrightarrow{Os}_m=(u_1^{(m)},\ldots,u_{n}^{(m)})$, with $u_j^{(m)}=\frac{a_j^{(m)}}{b_j^{(m)}}$,
 where $a_j^{(m)}$ and $b_j^{(m)}$ are integers. \\
The sequence $\{ s_m\}$ converges and is not stationary. This means that there exists $1\leq j\leq n$ such that the
 sequence $\{ b_j^{(m)}\}$ is unbounded. The circles corresponding to that slopes,  have length greater or equal to $2\pi b_j^{(m)}$. Hence, if $U$ is a regular subset of measure different than zero in the coamoeba, then it is covered infinitely many times under the argument map since the $b_j^{(m)}$ tend to infinity. This fact contradicts the fact that the area of the amoeba is finite. So, $\mathscr L^\infty (\mathscr C)$  is finite.

\end{proof}

\begin{lemma}\label{finite lmm}
 Let $\mathscr C\subset (\C^*)^n$ be a generic  analytic curve such that the area of its amoeba is finite, and
 $s$ be a point in $\mathscr{L}^{\infty}(\mathscr{C})$. The number of ends $\mathscr{E}$ of the amoeba 
$\mathscr{A}(\mathscr{C})$, such that $\overline {r(\mathscr{E})}\cap S^{n-1} = \{ s\}$, is finite.
\end{lemma}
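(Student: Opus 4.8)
The plan is to argue by contradiction, reusing the mechanism from the proof of Lemma~\ref{rational lmm}: from infinitely many ends pointing at $s$ I will extract a regular open subset $U_{0}\subset co\A(\mathscr C)$ that is covered infinitely many times by $\Arg$, with the corresponding sheets of $\mathscr C$ having pairwise disjoint images under $\Log$. Since $|\J(\Log_{I})|=|\J(\Arg_{I})|$ on the regular part of $\mathscr C$ (Proposition~\ref{jacobian det}), the map $\Log\circ\Arg^{-1}$ is locally area preserving; hence $\mathscr A(\mathscr C)$ would contain infinitely many pairwise disjoint subsets each of area $\mathrm{area}(U_{0})>0$, contradicting the finiteness of the area of $\mathscr A(\mathscr C)$.

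To set this up I would first normalize the direction. By Lemma~\ref{rational lmm}(i) the point $s$ has rational slope, so after a monomial automorphism of $(\C^{*})^{n}$ coming from $SL_{n}(\Z)$ -- which changes the amoeba by a linear isomorphism of $\R^{n}$, the coamoeba by an automorphism of $(S^{1})^{n}$, hence areas only by fixed positive rational factors -- I may assume $s=(-1,0,\dots,0)$, so that the circle of slope $s$ is the first coordinate circle $C=S^{1}\times\{1\}^{n-1}$. Next I would analyze $\mathscr C$ near the hyperplane $H=\{z_{1}=0\}$: as $\overline{\mathscr C}$ is analytic in $\C^{n}$ and $H$ is not one of its components, $\overline{\mathscr C}\cap H$ is discrete and $\mathscr C$ is a finite union of Puiseux branches near each of its points. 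An end $\mathscr E$ with $\overline{r(\mathscr E)}\cap S^{n-1}=\{s\}$ satisfies $\log|z_{j}|=o(\log|z_{1}|)$ for $j\ge 2$ along $\mathscr E$; on an analytic branch this forces each $z_{j}$ to converge, necessarily to a nonzero limit since $\mathscr E\subset(\C^{*})^{n}$ has direction exactly $(-1,0,\dots,0)$. Thus every such end determines a point $p=(0,c_{2},\dots,c_{n})\in\overline{\mathscr C}\cap H$ with all $c_{j}\neq0$, distinct ends giving distinct points, and infinitely many ends give infinitely many such points $p_{i}=(0,c^{(i)})$. Passing to a subsequence, $(\arg c_{2}^{(i)},\dots,\arg c_{n}^{(i)})$ converges in the compact torus $(S^{1})^{n-1}$, so the associated phase limit circles $C_{i}=\{(\phi,\arg c_{2}^{(i)},\dots,\arg c_{n}^{(i)}):\phi\in S^{1}\}\subset\mathscr P^{\infty}(\mathscr C)$ converge to a circle $C_{*}$.

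Each branch through $p_{i}$ contributes a $2$-dimensional sheet $D_{i}\subset co\A(\mathscr C)$ accumulating onto $C_{i}$: parametrizing a branch of multiplicity $m$ by $w$ with $z_{1}=w^{m}$ and $z_{j}=g_{j}^{(i)}(w)$, where $g_{j}^{(i)}$ is holomorphic near $0$ and $g_{j}^{(i)}(0)=c_{j}^{(i)}\neq0$, the map $\Arg$ sends $\{0<|w|<\varepsilon_{i}\}$ onto $D_{i}$, and for small $\rho$ the circle $\{|w|=\rho\}$ maps to a loop winding in the $\phi$-direction at transverse distance $O(\rho)$ from $C_{i}$. Since the tentacles $\widehat{W}_{i}:=\Log\bigl(\mathscr C\cap\Log^{-1}(\mathscr E_{i})\bigr)$ are pairwise disjoint and $\Log\circ\Arg^{-1}$ is locally area preserving, the problem reduces to extracting from the $D_{i}$ a single regular open $U_{0}\subset co\A(\mathscr C)$ covered infinitely often, with preimage pieces having disjoint $\Log$-images, after which the argument of Lemma~\ref{rational lmm} closes the proof.

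This last extraction is the main obstacle. For $n=2$ it is almost immediate: the $D_{i}$ are one-sided tubular sheets around the circles $C_{i}\to C_{*}$, so infinitely many of them (all on the same side, after a further subsequence) share a common half-tube around $C_{*}$, and any small disk inside it is covered infinitely often. For $n\ge3$ the sheets $D_{i}$ are only $2$-dimensional inside $(S^{1})^{n}$ and nearby sheets could a priori be almost disjoint, so one must combine the Puiseux normal forms at the points $p_{i}$ (which fix the $D_{i}$ to leading order) with the finiteness of $\mathrm{area}(\mathscr A(\mathscr C))$ itself to preclude that the $D_{i}$ are asymptotically pairwise disjoint; equivalently, one shows that infinitely many ends in direction $s$ force some regular open subset of the coamoeba to be infinitely covered. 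That is the step I expect to demand the most care.
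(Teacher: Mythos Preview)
Your strategy coincides with the paper's: assume infinitely many ends in direction $s$, produce a regular open $U\subset co\A(\mathscr C)$ covered infinitely often by $\Arg$, and use $|\J(\Log_I)|=|\J(\Arg_I)|$ to transport this to infinitely many disjoint pieces of equal positive area in $\A(\mathscr C)$. The paper's execution is briefer and less explicit than yours. It does not normalize $s$ or invoke Puiseux branches; instead it argues by a dichotomy on the phase limit circles $C_i\subset(S^1)^n$ attached to the ends: either only finitely many distinct $C_i$ occur, so some circle $C$ is the phase limit of infinitely many ends, or infinitely many distinct $C_i$ occur and by compactness of $(S^1)^n$ they accumulate on some circle $C$ of slope $s$. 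In both cases one takes a regular open $U$ in the coamoeba bounded by an arc of $C$ and asserts that $\Arg^{-1}(U)$ has infinitely many components with pairwise disjoint $\Log$-images. Your ``pass to a subsequence with $\arg c^{(i)}$ converging'' is exactly the accumulation case of this dichotomy.

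The extraction step you isolate for $n\ge 3$---why infinitely many $2$-dimensional sheets $D_i$ accumulating on a common circle inside $(S^1)^n$ must all cover a fixed regular open set, when a circle has no ``sides'' in a torus of dimension $\ge 3$---is precisely the point the paper passes over with ``using the same reasoning as above.'' You will not find a further argument there; the paper effectively treats the $n=2$ picture (where $C$ separates a neighborhood into two half-tubes and infinitely many sheets must fall on one of them) as a template for all $n$. Your explicit normalization $s=(-1,0,\dots,0)$ and local branch parametrizations $z_1=w^m$, $z_j=c_j^{(i)}+\dots$ put you in a better position to make this step precise, but that work is in addition to what the paper supplies.
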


\begin{proof}
Assume on the contrary that the  number of ends $\mathscr{E}$ of the amoeba $\mathscr{A}(\mathscr{C})$, such that $\overline {r(\mathscr{E})}\cap S^{n-1} = \{ s\}$, is infinite. Hence, two cases can hold.
\noindent The first case: the number of corresponding circles of $s$ in $(S^1)^n$ is finite. Then, there exists at 
least a circle $C$ covered  by the argument mapping infinitely many times. Take a regular open  
$U\subset co\mathscr{A}(\mathscr{C})$ bounded by a segment in $C$. Then, the number of connected components of 
$\Arg^{-1}(U)$ is infinite, and $\Log (\Arg^{-1}(U))$ has also an infinite number of connected components with the 
same area as $U$. This is a contradiction with the assumption.

\noindent The second case: the number of corresponding circles of $s$ in $(S^1)^n$ is infinite. Then, there exists a 
circle $C$ which is an accumulation circle with the same slope $s$, because the real torus is compact. Using the
 same reasoning as above, we obtain a contradiction. 
 
This implies that the number of ends of the amoeba with the same slope is finite.

\end{proof}

\begin{lemma}\label{henriques lmm}
 Let $\mathscr C$ be a generic curve in $(\C^*)^n$. If $S^{n-2}$ is a sub-sphere of $S^{n-1}=\partial B^n$, invariant under the involution $-\rm{id}$, then  
$\mathscr L^\infty (\mathscr C)$ intersects the interior of each connected components of
 $S^{n-1}\setminus S^{n-2}$. 
\end{lemma}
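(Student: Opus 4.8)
The plan is to argue by contradiction and to convert a "one-sided" logarithmic limit set into a violation of Henriques' convexity of amoeba complements (\cite{H-03}, Theorem 4.1). \textbf{Setup.} Suppose $\mathscr{L}^\infty(\mathscr C)$ misses the interior of one component of $S^{n-1}\setminus S^{n-2}$. Let $w$ be a normal to the linear hyperplane cutting out $S^{n-2}$, oriented so that the missed component is $H^+=\{x\in S^{n-1}:\langle x,w\rangle>0\}$; then $\mathscr{L}^\infty(\mathscr C)\subset\{\langle\cdot,w\rangle\le0\}$. By Lemma \ref{rational lmm} this set is finite with rational slopes, say $\{s_1,\dots,s_N\}$, so $\sigma:=\operatorname{cone}(s_1,\dots,s_N)$ is a rational polyhedral cone with $\sigma\ne\R^n$. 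I would first reduce to the case where $\sigma$ is salient and full-dimensional: if $W_0:=\sigma\cap(-\sigma)\ne 0$, I would push $\mathscr C$ forward under the toric projection $\pi\colon(\C^*)^n\to(\C^*)^n/T$, with $T$ the subtorus whose cocharacter lattice is the saturation of $W_0\cap\Z^n$ (passing to an analytic, resp. algebraic, closure as needed). Since $\mathscr C$ is generic, hence not contained in a translated subtorus, the image is again a generic curve; the induced $\Z$-linear surjection $\R^n\to\R^n/W_0$ is $1$-Lipschitz, so the amoeba area stays finite, and the new logarithmic limit set generates the cone $\sigma/W_0$, which is salient and still proper. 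Iterating, I may assume $\sigma$ is salient and $\ne\R^n$; then $\sigma^\vee$ is full-dimensional, so I can pick a primitive $v\in\Z^n$ with $\langle s_i,v\rangle<0$ for all $i$.

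\textbf{A nontrivial cycle at infinity.} Since every direction at infinity of $\A(\mathscr C)$ lies in $\mathscr{L}^\infty(\mathscr C)$ and $\langle\cdot,v\rangle<0$ there, the function $\langle\cdot,v\rangle$ is bounded above on $\A(\mathscr C)$, say by $c^\ast$, and $\A(\mathscr C)\cap\{\langle\cdot,v\rangle\ge c\}$ is compact for every $c$ (it is closed because $\Log$ is proper and $\mathscr C$ closed, and bounded by the same limiting argument). I would fix $t<c^\ast$ and take $L=\{\langle\cdot,v\rangle=t\}$, an $(n-1)$-plane of rational slope; since $n-1$ is exactly the codimension of $\mathscr C$, Henriques' theorem applies to $L$. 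The set $K:=\A(\mathscr C)\cap L$ is compact, and it is nonempty because $\A(\mathscr C)$ is connected and has an end along which $\langle\cdot,v\rangle\to-\infty$, so $\langle\cdot,v\rangle$ attains every value below $c^\ast$. A large sphere $\Sigma=S^{n-2}_R\subset L$ enclosing $K$ is then a cycle in $L\setminus\A(\mathscr C)$ representing a nonzero class in $H_{n-2}(L\setminus\A(\mathscr C))\cong H_{n-2}(\R^{n-1}\setminus K)$, because for a nonempty compact $K\subset\R^{n-1}$ the sphere-at-infinity class injects into this group (for $n=2$ this says a nonempty compact set disconnects a line).

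\textbf{The contradiction.} Next I would show $[\Sigma]=0$ in $H_{n-2}(\R^n\setminus\A(\mathscr C))$. Sweeping the ball $B_R\subset L$ bounded by $\Sigma$ in the direction $+v$ up to a level where $\langle\cdot,v\rangle>c^\ast$, and capping off with the translated ball there (which is disjoint from $\A(\mathscr C)$), produces an $(n-1)$-chain with boundary $\Sigma$; if $R$ was chosen larger than the diameter of the compact set $\A(\mathscr C)\cap\{\langle\cdot,v\rangle\ge t\}$, then every intermediate translate of $\Sigma$, and hence the whole chain, avoids $\A(\mathscr C)$. Thus $[\Sigma]$ is a nonzero $(n-2)$-cycle in $L\setminus\A(\mathscr C)$, with $L$ of rational slope, that dies in $H_{n-2}(\R^n\setminus\A(\mathscr C))$ --- contradicting Theorem 4.1 of \cite{H-03}. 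Hence $\mathscr{L}^\infty(\mathscr C)$ meets the interior of $H^+$, and symmetrically the interior of the other component.

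\textbf{Main obstacle.} The hard part is the first reduction: one must know that $\operatorname{cone}(\mathscr{L}^\infty(\mathscr C))$ may be taken salient and full-dimensional, i.e. that $\mathscr{L}^\infty(\mathscr C)$ cannot be confined to a closed half-space (let alone to a hyperplane) unless $\mathscr C$ degenerates into a translated subtorus. This is exactly where the finiteness of the amoeba area --- the standing hypothesis of this section --- is indispensable: without it the statement is false, for instance $z\mapsto(z,e^z)$ in $(\C^*)^2$ has amoeba $\{(x,y):|y|\le e^{x}\}$ of infinite area, with $\mathscr{L}^\infty=\{(0,1),(0,-1),(-1,0)\}$, whose cone is a half-plane. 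Making this reduction rigorous --- tracking the toric push-forward of the curve and of its (finite-area) amoeba, with the help of Lemmas \ref{rational lmm} and \ref{finite lmm} --- is the technical heart; everything after it is soft topology of $\R^n\setminus\A(\mathscr C)$ together with Henriques' convexity.
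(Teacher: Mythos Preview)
Your approach is the same as the paper's: both deduce the lemma from Henriques' $(n-1)$-convexity of $\R^n\setminus\A(\mathscr C)$. The paper's proof, however, is a two-sentence sketch --- it simply asserts that for a generic curve the intersection of the half-spaces normal to the directions $s\in\mathscr L^\infty(\mathscr C)$ is compact (equivalently, that these directions positively span $\R^n$), and declares this a consequence of Henriques. You have supplied the mechanism the paper omits: assuming $\mathscr L^\infty(\mathscr C)$ misses an open half-sphere, you manufacture a rational $v$ with $\langle s_i,v\rangle<0$, show $\langle\cdot,v\rangle$ is bounded above on $\A(\mathscr C)$, and exhibit an $(n-2)$-sphere in a rational hyperplane $L$ that is nonzero in $H_{n-2}(L\setminus\A)$ but bounds in $H_{n-2}(\R^n\setminus\A)$, contradicting \cite{H-03}. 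This is exactly the argument the paper gestures at.

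Two remarks. First, your toric-projection reduction to a salient cone is machinery the paper does not introduce; as you yourself flag, it is delicate (the image $\pi(\mathscr C)$ need not be a closed analytic subvariety since $\pi$ is not proper, so one must justify that Henriques' theorem still applies to it, and that the projected amoeba has the required properties). The paper bypasses this entirely by asserting compactness directly, so your write-up is more honest about where the work lies but also imports a step that needs further care. Second, your illustrative example is slightly off: for $z\mapsto(z,e^z)$ the logarithmic limit set is not three points but a half-circle together with the isolated point $(-1,0)$ (cf.\ Example~1 in Section~\ref{examples}); in particular it does meet the interior of every half-sphere, so it does not witness failure of the lemma, though your broader point---that finite area (hence Lemma~\ref{rational lmm}) is being used implicitly---is well taken and consistent with the standing hypothesis of the section.
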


\begin{proof}
Using Henriques theorem \cite{H-03}, we know that the complement components of amoebas are $l-$convex, if
 ${\rm codim} V=l+1$. Lemma \ref{henriques lmm} is a consequence of this fact. If $\mathscr C$  is generic
 (it is not contained in a complex algebraic torus of smaller dimension), then the intersection of the closed half 
spaces in $\mathbb{R}^n$  bounded by the hyperplanes normal to the directions 
$s\in \mathscr{L}^{\infty}(\mathscr{C})$ is compact.
\end{proof}

Let $\mathscr C\subset (\C^*)^n$ be a generic analytic curve with defining ideal $\mathcal I(\mathscr C)$ such 
that the area of its amoeba is finite. Let $s\in\mathscr L^\infty (\mathscr C)$ with slope $(u_1,\ldots,u_n)$,
 and $D_s$ be a straight line in $\R^n$ directed by $s$ and asymptotic to the amoeba $\mathscr{A}$. We denote  
by $\mathscr{H}({D_s})$  the holomorphic cylinder  which is the lifting of $D_s$ and asymptotic to the end of 
$\mathscr C$ corresponding to $s$.

\begin{lemma}\label{decomposition lmm}
Let $\mathscr C\subset (\C^*)^n$ be a generic analytic curve as above.
 Then, the ideal $ \mathcal I(\mathscr C)$ is generated by a set of holomorphic functions $\{f_1,\ldots,f_q\}$ such that for any $j=1,\ldots, q$, we 
have $f_j=h_jg_j$, where $h_j$ are holomorphic functions  without  zeros in a neighborhood of 
$\mathscr{H}({D_s})$,
  and $g_j$  are entire functions where the terms of their power series have powers in the closed  half space 
 $\{ \alpha\in\R^n \, \vert\,  \sum_{i=1}^n\alpha_i.u_i\leq 0\}$.
\end{lemma}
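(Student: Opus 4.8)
The plan is to analyze the end of $\mathscr C$ asymptotic to $D_s$ by choosing coordinates adapted to the direction $s$, and to show that the vanishing of a generator of $\mathcal I(\mathscr C)$ near the holomorphic cylinder $\mathscr H(D_s)$ forces each generator to factor as claimed. First I would perform a monomial change of coordinates: since $s$ has rational slope by Lemma \ref{rational lmm}(i), after rescaling $\overrightarrow{Os}=(u_1,\dots,u_n)\in\Z^n$ with $\gcd$ equal to $1$, extend it to an element of $GL_n(\Z)$; this induces a monomial automorphism of $(\C^*)^n$ carrying $D_s$ to the direction of the first axis and carrying $\mathscr H(D_s)$ to a standard cylinder $\{|z_1|\to\infty,\ |z_2|=\dots=|z_n|=\text{const}\}$. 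It suffices to prove the statement in these new coordinates, so I assume $s=(1,0,\dots,0)$ and $\mathscr H(D_s)=\{(z_1,c_2,\dots,c_n):|z_1|>R\}$ for suitable $c_j\in\C^*$ and $R$ large.

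Next I would take an arbitrary holomorphic $f\in\mathcal I(\mathscr C)$, defined and holomorphic on a neighborhood of $\mathscr H(D_s)$ in $\C^n$; enlarging this neighborhood to a polydisc-times-annulus region $\Omega=\{|z_1|>R\}\times\prod_{j\ge2}\{|z_j-c_j|<\varepsilon\}$, I expand $f=\sum_{\alpha\in\Z^n}a_\alpha z^\alpha$ as a Laurent–Taylor series: a Laurent series in $z_1$ (with only finitely many negative powers, since $f$ extends holomorphically across $z_1=\infty$ only up to a pole of bounded order — or, if $f$ is entire in $z_1$, none at all) and an ordinary power series in $z_2-c_2,\dots,z_n-c_n$. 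Grouping by the $z_1$-degree, write $f=\sum_{m\in\Z} z_1^{m}\,b_m(z_2,\dots,z_n)$. The key claim is that the "top part," corresponding to $z_1$-degrees $m$ with $\sum\alpha_i u_i=\alpha_1>0$ in the original normalization, carries no zeros of $\mathscr C$ near the cylinder: the end of $\mathscr C$ corresponding to $s$ lies in $\Omega$ and on it $|z_1|$ is large, so the term of largest $z_1$-degree dominates; were $b_{m_{\max}}$ not identically zero, $f$ would be nonvanishing on that end, contradicting $f\in\mathcal I(\mathscr C)$. Hence the power series of $f$ is supported, after dividing out the lowest power of $z_1$ occurring (a unit on $\mathscr H(D_s)$, giving the factor $h_j$), on exponents with $\alpha_1\le0$, i.e.\ in the half-space $\{\alpha\cdot u\le0\}$; this gives the factorization $f=h\,g$ with $g=\sum_{\alpha_1\le0}a_\alpha z^\alpha$ entire and $h=z_1^{\text{(shift)}}$ (or a unit) nonvanishing near $\mathscr H(D_s)$. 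Applying this to a finite generating set $\{f_1,\dots,f_q\}$ of $\mathcal I(\mathscr C)$ — which exists because the ring of holomorphic functions is Noetherian in the relevant sense, or one may simply take any finite generating set granted in the hypotheses — yields the lemma.

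The main obstacle I expect is making precise the claim that "the dominant $z_1$-term controls vanishing on the end": one must know that the end of $\mathscr C$ associated to $s$ genuinely accumulates onto $\mathscr H(D_s)$ in the strong sense that on that end $(z_2,\dots,z_n)\to(c_2,\dots,c_n)$ while $|z_1|\to\infty$, so that a Laurent–Taylor expansion converging on $\Omega$ has its asymptotics governed by the extreme $z_1$-exponent. This is exactly where the phase limit set / logarithmic limit set structure enters: the asymptotic cylinder $\mathscr H(D_s)$ is defined as the lift of $D_s$ adapted to the end, and one should invoke the description of ends near a ray of $\mathscr L^\infty(\mathscr C)$ (together with genericity of $\mathscr C$, via Lemma \ref{henriques lmm}, to ensure the end is one-dimensional and sits inside a single such cylinder) to justify the domination. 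A secondary technical point is controlling the order of the pole of $h$ at $z_1=\infty$ — equivalently, bounding the number of negative $z_1$-powers — which follows from finiteness of the number of ends with direction $s$ (Lemma \ref{finite lmm}), ensuring the Laurent part in $z_1$ is finite so that $h$ is a genuine holomorphic unit near $\mathscr H(D_s)$ after the monomial shift.
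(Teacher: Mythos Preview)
Your reduction via a $GL_n(\Z)$ monomial change to $s=e_1$ is a good normalization, but the heart of your argument---``the term of largest $z_1$-degree dominates, so were $b_{m_{\max}}$ not identically zero, $f$ would be nonvanishing on the end''---has a genuine gap: you are tacitly assuming that an $m_{\max}$ exists. For transcendental generators this is exactly what fails, and transcendental generators are the whole point of the lemma (we are trying to \emph{deduce} algebraicity). The paper's own Example~1, $f(z_1,z_2)=z_2-e^{z_1}$, already shows the phenomenon: along the end in direction $e_1$ one has $|z_1|\to\infty$, the series has unbounded $z_1$-degree, and $f$ still vanishes identically on the curve. Your companion claim of ``only finitely many negative $z_1$-powers'' is equally unjustified: a $GL_n(\Z)$ change sends the support cone $\Z_{\ge 0}^n$ to another rational cone that need not be bounded below in its first coordinate, and Lemma~\ref{finite lmm} counts geometric ends, not Laurent orders---turning a bound on ends into a bound on pole order at $z_1=\infty$ would require a Weierstrass preparation at infinity, which is unavailable precisely when $f$ has an essential singularity there. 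The obstacle you flag (whether the end truly accumulates on $\mathscr H(D_s)$) is secondary; the missing step is the existence of any ``extreme $z_1$-exponent'' at all.

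The paper does not attempt a direct construction of $g_j$ from a truncated expansion. Instead it argues indirectly: if no factorization $f_j=h_jg_j$ with $g_j$ supported in $\{\alpha\cdot u\le 0\}$ exists, then the direction $s$ is forced to be an accumulation point (of end-directions in $\mathscr L^\infty(\mathscr C)$), and this is already ruled out by Lemma~\ref{rational lmm} under the finite-area hypothesis. So the finite-area assumption is used as the contradiction lever that produces the factorization, rather than as input to an explicit Laurent truncation. If you want to salvage the constructive route, you would need to first show (using finiteness of ends) that for each fixed $(z_2,\dots,z_n)$ near $(c_2,\dots,c_n)$ the slice $z_1\mapsto f(z_1,z_2,\dots,z_n)$ has only finitely many zeros in $|z_1|>R$, and then promote this to a factorization $f=h\cdot g$ with $h$ a unit near the cylinder; but that is essentially a preparation theorem and needs its own argument.
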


\begin{proof}
Without loss of generality, by Lemma \ref{finite lmm}, we can  assume that  the amoeba contains only one end corresponding to  $s\in \mathscr L^\infty (\mathscr C)$.
If we don't have such  decomposition of the $f_j$'s, then $s$ is an accumulation point, and this cannot happen 
because the area of the amoeba is finite by assumption. So, there exist $h_j$, $g_j$, and $U(s)$ is a neighborhood
 of a holomorphic cylinder on which $h_j$ don't vanish. Namely, the holomorphic cylinder is the lifting in 
$(\C^*)^n$ via the logarithmic map of a straight line in $\R^n$ of slope $(u_1,\ldots,u_n)$. This holomorphic 
cylinder is such that the closure in $B^n$ of the retraction by $r$ (defined in Section \ref{prelim}) of 
the image under the
logarithmic map  of one of  its ends, intersects the boundary of $B^n$ precisely on $s$. The function $g_j$ are entire 
where all their terms (as power series) are of powers $\alpha$ with $\sum u_i\alpha_i\leq 0$. Without loss of 
generality, we can assume that $\Cr$ is an irreducible curve. Hence, $\Cr$ is contained in the zero locus of 
the $g_j$'s. In other words, our curve $\Cr$ is contained in the curve with defining ideal $\mathcal{I}_{s}$ spanned the by $g_j$'s.
\end{proof}

%\vspace{0.1cm}

\begin{proof}[Proof of Theorem \ref{thm curve}]

If the area of the amoeba of  $\Cr$ is finite, then using Lemma \ref{henriques lmm}, and doing the same thing 
for each vertex in $\mathscr L^\infty (\mathscr C)$ as in Lemma \ref{decomposition lmm}   we obtain polynomials $p_1,\ldots,p_q$ 
such that $\Cr$ is contained in $\Cr_r$ where $\Cr_r$ is the algebraic curve defined by the ideal generated by
 $p_1,\ldots,p_q$. 
%The dimension of $\Cr_r$ is the same as $\Cr$ because the dimension of its ends is one.  
Hence, $\Cr$ is an irreducible component of $\Cr_r$, then  it is algebraic. 
 If the curve is algebraic then using Corollary \ref{finite cor}, we deduce that the area of the amoeba 
$\mathscr{\Cr}$ is finite because the area of the coamoeba is always finite. 
\end{proof}

\begin{corollary}
Let $\mathscr{C}$ be an analytic curve (not necessary algebraic) in $(\mathbb{C}^*)^n$.
 Then $\mathscr{L}^{\infty}(\mathscr{C})$ is the union of a finite number of isolated points with rational slopes 
and a finite number of  a geodesic arcs with rational  end slopes. In particular, if $\mathscr{C}$ is not
 algebraic, then the number of arcs in $\mathscr{L}^{\infty}(\mathscr{C})$ is different than zero.
\end{corollary}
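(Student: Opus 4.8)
The plan is to combine the two previously established structural results about $\mathscr{L}^{\infty}(\mathscr{C})$ — rationality of slopes and finiteness (Lemma \ref{rational lmm}) — with the main theorem of this section (Theorem \ref{thm curve}) and the Bergman--Bieri--Groves structure theorem. First I would treat the algebraic case: if $\mathscr{C}$ is algebraic, then by the Bergman--Bieri--Groves theorem $\mathscr{L}^{\infty}(\mathscr{C})$ is a finite rational spherical polyhedron of dimension $k-1 = 0$, hence a finite set of isolated points, each with rational slope; so there are no arcs at all. This is the degenerate boundary case of the corollary.

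The substance is the non-algebraic case. Here I would argue that $\mathscr{L}^{\infty}(\mathscr{C})$, being the logarithmic limit set of a curve whose amoeba may have infinite area, is still a priori a closed subset of $S^{n-1}$. The key point is to upgrade Lemma \ref{rational lmm}: that lemma assumed finite amoeba area to conclude that $\mathscr{L}^{\infty}(\mathscr{C})$ is a finite set of points with rational slopes; without the finiteness-of-area hypothesis this fails, and one must allow one-dimensional pieces. I would show that any connected component of $\mathscr{L}^{\infty}(\mathscr{C})$ of positive dimension is a geodesic arc of $S^{n-1}$ (i.e., a sub-arc of a great circle) with rational endpoint slopes. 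The geodesic nature follows from the fact that the convex cone over $\mathscr{L}^{\infty}(\mathscr{C})$ is, locally near a point of $\mathscr{L}^{\infty}$, the recession information of the amoeba; for a curve the relevant piece is at most one-dimensional, and a one-dimensional piece of a logarithmic limit set is necessarily a union of rays spanning a $2$-plane, whose trace on the sphere is a geodesic arc. The rationality of the endpoints then comes from re-running the argument of Lemma \ref{rational lmm}(i): if an endpoint $s$ had irrational slope, the phase limit set $\mathscr{P}^{\infty}(\mathscr{C})$ near $s$ would contain a $2$-torus, forcing a regular open subset of the coamoeba to be covered infinitely often by $\Arg$, and one would get infinitely many disjoint copies in the amoeba of fixed positive area accumulating at $s$ — but this would force a full $2$-dimensional piece of $\mathscr{L}^{\infty}$ there, contradicting that $\dim \mathscr{C} = 1$ forces $\dim \mathscr{L}^{\infty}(\mathscr{C}) \leq 1$ wherever the covering is locally finite, and contradicting genericity elsewhere. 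Finiteness of the number of isolated points and arcs would follow by compactness of $S^{n-1}$ together with the fact that an accumulation of components would again produce an accumulation circle in $(S^1)^n$ covered infinitely often, hence (as in the second case of Lemma \ref{finite lmm}) a piece of $\mathscr{L}^{\infty}$ of too large dimension.

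Finally, for the last assertion — if $\mathscr{C}$ is not algebraic then the number of arcs is nonzero — I would argue by contraposition using Theorem \ref{thm curve}. If $\mathscr{L}^{\infty}(\mathscr{C})$ consists of finitely many isolated points only (no arcs), then by Lemma \ref{rational lmm}-type reasoning the amoeba has only finitely many ends, each of rational slope, and one can run the decomposition argument of Lemma \ref{decomposition lmm} at each of these finitely many vertices exactly as in the proof of Theorem \ref{thm curve}, obtaining polynomials $p_1,\dots,p_q$ with $\mathscr{C}$ an irreducible component of the algebraic curve they define; hence $\mathscr{C}$ is algebraic, a contradiction. So a non-algebraic curve must have at least one arc in its logarithmic limit set.

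The main obstacle, I expect, is making rigorous the claim that a positive-dimensional connected component of $\mathscr{L}^{\infty}(\mathscr{C})$ is exactly a \emph{geodesic arc with rational endpoints}, rather than some wilder $1$-dimensional set. The rational-slope-of-endpoints part is reachable via the coamoeba-covering trick already used in Lemma \ref{rational lmm}, but showing the component is a genuine great-circle arc (and not, say, a Cantor-like or non-geodesic curve) requires understanding the local structure of logarithmic limit sets of analytic — not algebraic — curves near a boundary point; one wants a local analogue of the Bergman--Bieri--Groves polyhedrality, presumably obtained from the local factorization $f_j = h_j g_j$ of Lemma \ref{decomposition lmm} applied along the relevant direction, which shows $\mathscr{C}$ is locally contained in an algebraic curve whose logarithmic limit set near $s$ is polyhedral of dimension $\le 1$.
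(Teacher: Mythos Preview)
Your overall architecture is sensible, but you are missing the paper's key device and, as a result, the parts you flag as ``the main obstacle'' stay obstacles. The paper's argument is much shorter: it never tries to re-run the finite-area arguments of Lemma~\ref{rational lmm} in the non-algebraic setting. Instead it uses a constraint that is \emph{always} available for a curve, namely that the coamoeba of $\mathscr{C}\subset(\mathbb{C}^*)^n$ has real dimension at most $2$, and hence so does the phase limit set $\mathscr{P}^{\infty}(\mathscr{C})$. From this, the geodesic nature of any arc in $\mathscr{L}^{\infty}(\mathscr{C})$ is immediate: a non-geodesic arc would span (in the cone sense) a linear space of dimension $\geq 3$, and the corresponding piece of the phase limit set would contain a flat torus of dimension $\geq 3$, contradicting $\dim co\mathscr{A}(\mathscr{C})=2$. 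The same dimension bound is what underlies the ``consequence of the proof of Lemma~\ref{rational lmm}'' for the remaining claims (rational endpoints, finiteness of pieces): one replaces the conclusion ``infinite amoeba area'' in that proof by ``phase limit set of dimension $\geq 3$'', which is a contradiction without any hypothesis on the amoeba.

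By contrast, your route has real gaps. In your rational-endpoint argument you correctly reproduce the $2$-torus in $\mathscr{P}^{\infty}$ and the infinitely many disjoint equal-area pieces in the amoeba, but then assert this forces a $2$-dimensional piece of $\mathscr{L}^{\infty}(\mathscr{C})$; that does not follow --- infinitely many equal-area strips can perfectly well escape along a single direction, so you get infinite amoeba area (which is allowed here), not a higher-dimensional logarithmic limit set. Your geodesic argument via ``recession information'' and ``rays spanning a $2$-plane'' is the right intuition but, as you note, is not a proof; the paper's coamoeba-dimension bound is exactly the missing ingredient. Finally, your contraposition for the last assertion invokes Lemma~\ref{decomposition lmm}, whose statement and proof assume finite amoeba area; you would need to check that only the conclusions of Lemma~\ref{rational lmm} and Lemma~\ref{finite lmm} (finite rational $\mathscr{L}^{\infty}$, finitely many ends per direction) are actually used there, not the area hypothesis itself.
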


\vspace{0.2cm}

This is a consequence of the  proof of Lemma \ref{rational lmm}. The arcs are geodesic (i.e., contained in some circle invariant under the involution $x\mapsto -x$) because   the contrary means that the phase limit set contains a 
flat torus of dimension at least three, and this contradict the fact that the dimension of the coamoeba  is equal 
 two.

\vspace{0.2cm}

\begin{corollary}\,\,
Let $f$ be an entire function in two variables. Then $f = hp$ where $h$ is a holomorphic function which doesn't vanish in the complex algebraic torus and $p$ is a polynomial if and only if the area of amoeba of the  holomorphic
 curve in $(\mathbb{C}^*)^2$ defined by $f$ is finite.
\end{corollary}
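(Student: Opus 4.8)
The plan is to derive both implications directly from Theorem \ref{thm curve}, which is the substantive result, treating this corollary as essentially a restatement in the special case $n=2$. First I would observe that given an entire function $f$ in two variables, the holomorphic curve $\mathscr{C}=\{f=0\}\cap(\C^*)^2$ is an analytic curve in $(\C^*)^2$ in the sense of Section \ref{prelim}, so Theorem \ref{thm curve} applies: the area of $\A(\mathscr{C})$ is finite if and only if $\mathscr{C}$ is algebraic. Thus the entire content to be checked is the equivalence between "$\mathscr{C}$ is algebraic" and "$f=hp$ with $h$ nonvanishing on $(\C^*)^2$ and $p$ a polynomial."

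For the direction $(\Leftarrow)$, if $f=hp$ with $h$ nowhere zero on the torus, then $\mathscr{C}=\{f=0\}\cap(\C^*)^2=\{p=0\}\cap(\C^*)^2$, which is an algebraic curve; by Corollary \ref{finite cor} (equivalently, the last line of the proof of Theorem \ref{thm curve}) its amoeba has finite area. For the direction $(\Rightarrow)$, suppose the area of $\A(\mathscr{C})$ is finite. By Theorem \ref{thm curve}, $\mathscr{C}$ is algebraic, so it is contained in the zero locus of some polynomial; more precisely, from the proof of Theorem \ref{thm curve}, $\mathscr{C}$ is an irreducible component of an algebraic curve $\mathscr{C}_r$ cut out by polynomials. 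I would then argue that the reduced ideal of $\mathscr{C}$ in $\C[z_1^{\pm1},z_2^{\pm1}]$ is principal (a codimension-one algebraic subvariety of the torus is a principal divisor since $\C[z_1^{\pm1},z_2^{\pm1}]$ is a UFD), generated by a Laurent polynomial $p$, which after clearing denominators we may take to be a genuine polynomial not divisible by $z_1$ or $z_2$. Then $f$ and $p$ have the same zero set on $(\C^*)^2$; viewing $f/p$ on $(\C^*)^2$, the quotient $h:=f/p$ extends to a holomorphic function with no zeros on $(\C^*)^2$ — here one invokes that $f$ vanishes to the appropriate order along $\{p=0\}$ (since $\mathscr{C}$ is reduced as a set and, by genericity/irreducibility, we may assume $f$ itself is reduced, or else replace $f$ by its reduction) so that the division is holomorphic, and $h$ cannot vanish since $p$ already accounts for all zeros of $f$ in the torus.

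The main obstacle I anticipate is the bookkeeping around multiplicities and reducedness: the statement as phrased speaks of an arbitrary entire $f$, so $f$ could a priori carry multiple components or non-reduced structure, and one must be careful that "$f=hp$" holds as an equality of entire functions and not merely that the zero sets agree. The clean way around this is to reduce to the case where $f$ is reduced (replace $f$ by the product of its irreducible factors, noting that finiteness of the amoeba area is a condition on the zero set only, hence unaffected), handle that case via the UFD argument above, and then remark that the general case follows by multiplying back the extra (holomorphic, torus-nonvanishing or polynomial) factors into $h$ and $p$ respectively. A secondary technical point is justifying that $h=f/p$ is \emph{entire} on $\C^2$, not just holomorphic on $(\C^*)^2$; but since $p$ is assumed not divisible by $z_1$ or $z_2$ and one only claims $h$ is holomorphic and nonvanishing \emph{in the torus}, this is automatic from the local structure of the division of holomorphic functions once the zero sets with multiplicity match on $(\C^*)^2$.
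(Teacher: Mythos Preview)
The paper gives no explicit proof of this corollary; it is stated immediately after Theorem \ref{thm curve} as a direct consequence, so your plan to derive both implications from that theorem is exactly the intended route and is essentially correct.

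The only point that deserves tightening is your treatment of multiplicities in the $(\Rightarrow)$ direction. Your proposed fix---replace $f$ by its ``reduction'' and then multiply factors back in---is awkward, because entire functions in two variables do not admit a factorization into irreducibles in any useful sense, so ``the reduction of $f$'' is not a well-defined object. The clean argument avoids this entirely: once Theorem \ref{thm curve} tells you that $\{f=0\}\cap(\C^*)^2$ is algebraic, every irreducible component $D_i$ of the zero divisor of $f$ on $\C^2$ is algebraic (those meeting $(\C^*)^2$ because their intersection with the torus is an irreducible component of an algebraic curve and they are its Zariski closure; those not meeting $(\C^*)^2$ because they must then be a coordinate hyperplane). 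Hence each $D_i=\{p_i=0\}$ for an irreducible polynomial $p_i$, and if $m_i$ is the vanishing order of $f$ along $D_i$ you set $p=\prod_i p_i^{m_i}$. Then $\operatorname{div}(f)=\operatorname{div}(p)$ on all of $\C^2$, so $h:=f/p$ is an entire function with no zeros anywhere, in particular nonvanishing on $(\C^*)^2$. This simultaneously disposes of your ``secondary technical point'' about whether $h$ is entire: with $p$ chosen this way it automatically is.
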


\vspace{0.1cm}
%%%%%%%%%%%%%%%%%%%%%%%%%%%%%%%%%%%%%
\section{Proof of the Main theorem}\label{proof main}
%%%%%%%%%%%%%%%%%%%%%%%%%%%%%%%%%%%%
\vspace{0.3cm}

In this section, we generalize the result of Section \ref{curve section} to k-dimensional analytic varieties in 
the complex algebraic torus $(\C^*)^n$ with $n\geq 2k$. The following lemma generalizes Lemma 
\ref{rational lmm} to higher dimensions.

\begin{proposition}\label{finite proposition}
 Let $V$ be a $k-$dimensional  generic analytic variety in $(\C^*)^n$, with $n\geq 2k$. Assume that the volume of
 its amoeba is finite. Then its logarithmic limit set $\mathscr L^\infty (V)$ is a finite rational spherical polyhedron of dimension $k-1$.
\end{proposition}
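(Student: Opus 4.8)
The plan is to mimic the structure of the curve case (Section~\ref{curve section}) but work with $k$-dimensional strata of the logarithmic limit set instead of isolated points. First I would recall the Bergman--Bieri--Groves type structure theorem: for any analytic $V$, $\mathscr L^\infty(V)$ is a closed subset of $S^{n-1}$ of dimension at most $k-1$, and the goal is to upgrade "closed of dimension $\leq k-1$" to "finite rational spherical polyhedron of dimension exactly $k-1$" under the finite-volume hypothesis. The key local tool is Proposition~\ref{jacobian det}: on $V_{reg}$ the map $\Log\circ\Arg^{-1}$ is volume-preserving wherever it is defined, so a patch of the coamoeba that is covered infinitely often by $\Arg$ forces infinitely many disjoint patches of equal positive $2k$-volume in $\A(V)$, contradicting finiteness. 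This is exactly the engine used in Lemmas~\ref{rational lmm} and \ref{finite lmm}, and I would reuse it verbatim in the higher-dimensional setting.

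The argument then proceeds in three steps mirroring Lemma~\ref{rational lmm}. Step one: rationality of the tangent directions. Suppose $\mathscr L^\infty(V)$ contains a point $s$ lying in (the relative interior of) a piece whose linear span has an irrational direction. Then, as in the curve proof, the phase limit set $\mathscr P^\infty(V)$ contains a flat torus of real dimension $\geq k+1$ obtained by taking the closure in $(S^1)^n$ of an affine subspace of the universal cover whose directions include the irrational slope; this produces a regular open $U\subset co\A(V)$ covered infinitely many times by $\Arg$, hence infinitely many disjoint equal-volume copies in $\A(V)$ by the volume-preserving property — contradiction. Step two: discreteness of the "combinatorial data." If $\mathscr L^\infty(V)$ were not a finite polyhedron, then (being closed in the compact sphere) either it fails to be locally polyhedral at some point, or it has infinitely many maximal faces accumulating somewhere; in the first case one extracts a sequence of rational pieces whose defining integer vectors have denominators tending to infinity (as in part (ii) of Lemma~\ref{rational lmm}), and the corresponding sub-tori in $(S^1)^n$ have volume tending to infinity, again forcing infinite covering of a regular $U\subset co\A(V)$; in the second case the accumulating faces play the role of the "accumulation circle" in Lemma~\ref{finite lmm}. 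Either way finiteness of $\mathrm{vol}(\A(V))$ is violated. Step three: the dimension is exactly $k-1$. One direction ($\leq k-1$) is the structure theorem; for $\geq k-1$ I would invoke Henriques' $l$-convexity (Lemma~\ref{henriques lmm} and \cite{H-03}): genericity of $V$ forces the intersection of the rational half-spaces normal to the directions in $\mathscr L^\infty(V)$ to be compact, which is impossible if $\mathscr L^\infty(V)$ has dimension $<k-1$ since then it could not "surround" a bounded region in the relevant homological sense.

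Concretely I would phrase it as follows. By Corollary~\ref{finite cor}-type reasoning and Proposition~\ref{jacobian det}, on $V_{reg}$ one has $|\psi_{2k}|_{\Log^*\mathcal E_n}=|\psi_{2k}|_{\Arg^*\mathcal E_n}$ pointwise, so for any Borel $U\subset co\A(V)$ meeting $co\A$ in a set where $\Arg$ is a finite cover of multiplicity $\geq N$, the preimage contributes at least $N$ disjoint regions to $\A(V)$ each of $2k$-volume equal to $\mathrm{vol}(U)$ (one transports $U$ along the dense immersed affine subspace in the torus direction $s$, choosing successive connected components disjoint from the closures of the earlier ones, exactly as in the displayed construction in Lemma~\ref{rational lmm}). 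Letting $N\to\infty$ contradicts $\mathrm{vol}(\A(V))<\infty$. Applying this with $U$ drawn from a putative irrational flat torus gives rationality; applying it with $U$ drawn from sub-tori of unbounded volume gives local finiteness and hence, together with closedness and the structure theorem, that $\mathscr L^\infty(V)$ is a finite rational spherical polyhedron; Henriques' theorem pins the dimension at $k-1$.

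I expect the main obstacle to be Step two: making rigorous the passage from "not a finite rational polyhedron" to "a regular open subset of the coamoeba is covered infinitely often." In the curve case this is clean because the faces are points and the sub-tori are honest circles whose lengths are literally $2\pi$ times an unbounded integer; in dimension $k$ one must argue that an infinite, non-polyhedral, or accumulating family of $(k-1)$-dimensional rational faces yields either integer normal vectors of unbounded denominator or an accumulation of parallel faces, and in both situations produce the corresponding $k$-dimensional sub-tori in $\mathscr P^\infty(V)$ of unbounded volume (or an accumulating family thereof). Controlling the geometry of $\mathscr P^\infty(V)$ attached to each face — in particular that a face of $\mathscr L^\infty(V)$ of denominator $b$ contributes a flat $k$-torus whose covering multiplicity over a fixed regular patch of $co\A(V)$ grows with $b$ — is the technical heart, and I would handle it by the same phase-limit-set analysis as in \cite{NS-09}, reducing to the curve computation by slicing $V$ with generic subtori of complementary dimension.
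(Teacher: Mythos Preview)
Your overall strategy matches the paper's: the engine (Proposition~\ref{jacobian det} makes $\Log\circ\Arg^{-1}$ locally volume-preserving, so infinite covering of a regular patch of $co\mathscr{A}(V)$ forces infinite amoeba volume) and the three-part structure (rationality, finiteness, dimension) are exactly what the paper uses.

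There is, however, a genuine gap in your Step~3. You assert that ``for any analytic $V$, $\mathscr L^\infty(V)$ is a closed subset of $S^{n-1}$ of dimension at most $k-1$,'' and later invoke this as ``the structure theorem'' to obtain the upper bound on the dimension. This is false for analytic varieties: the Bergman--Bieri--Groves theorem quoted in Section~\ref{prelim} is stated and proved only for \emph{algebraic} $V$, and Example~1 in Section~\ref{examples} shows it fails in the analytic setting---the curve $z_2=e^{z_1}$ has $k=1$ but its logarithmic limit set contains a one-dimensional arc, so $\dim\mathscr L^\infty(\mathscr C)=1>k-1=0$. The bound $\dim\mathscr L^\infty(V)\leq k-1$ is therefore not available a priori and must itself be extracted from the finite-volume hypothesis.

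The paper does exactly this, reusing the irrational-slope mechanism you already have in Step~1: if $\dim\mathscr L^\infty(V)\geq k$, then $\mathscr L^\infty(V)$ contains an $l$-cell with $l\geq k$; any such cell, being of positive dimension on the sphere, contains a point of irrational slope; the corresponding closure in $(S^1)^n$ is then a flat torus of dimension at least $l+1\geq k+1$ inside $\mathscr P^\infty(V)$, and your engine gives the contradiction. So the fix is to move the dimension upper bound out of ``structure theorem'' territory and fold it into the same infinite-covering argument that handles rationality and finiteness. Your extra argument for the lower bound $\dim\geq k-1$ via Henriques does not appear in the paper's proof of this proposition, but it is a reasonable addition.
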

   
\begin{proof}
 If $\mathscr L^\infty (V)$ is not a rational spherical polyhedron, and $v$ is a vertex of $\mathscr L^\infty (V)$
 with irrational slope $u$, then the phase limit set  $\mathscr P^\infty(V)$ contains a torus of dimension at
 least  $(k-1)+2=k+1$ which is  the closure of an immersed circle of slope $u$. Using the same argument as in  the 
curve  case (i.e., the fact that locally $| \J(\Log\circ \Arg^{-1})|=1$,  the map $\Log\circ \Arg^{-1}$
  conserves the volumes), we deduce that 
   the volume of $\A(V)$ in infinite. Indeed, if $U$ is a regular subset of the coamoeba $co\mathscr{A}(V)$ with nonvanishing
 volume, then it is covered via the argument map infinitely many times. This is in contradiction with the 
assumption on the volume of the amoeba  $\mathscr{A}(V)$. Therefore, $\mathscr L^\infty (V)$ is rational. 

Now, suppose that 
$\dim (\mathscr L^\infty (V)) > k-1$. Then, it contains a point of
 irrational slope in some $l-$cell with $l\geq k$. Hence, its phase limit set contains a torus of dimension at 
least  $l+1\geq k+1$.  By the same argument as in the proof of Lemma \ref{rational lmm}, we obtain a contradiction
 on the assumption on the volume of the amoeba.

 It remains to show that  $\mathscr L^\infty (V)$ is finite.  Otherwise, it contains 
an accumulation $(k-1)$-cell, which means that $\mathscr{P}^{\infty}(V)$ contains a torus of dimension at least 
$k+1$ (where $k+1 = (k-1) +1 +1$, such that $k-1$ comes from the dimension of the cells, the first one comes from 
the circle corresponding to the direction $s$, and the second one comes from the accumulation direction).  Using 
the same reasoning as in Lemma \ref{rational lmm} (ii), we get a contradiction on  the assumption of  the amoeba
 volume.
\end{proof}

\vspace{0.2cm}

\begin{definition}
 Let $s$ be a vertex of $\mathscr L^\infty (V)$. A polyhedron $P$ is  in the direction of $s$, if for any point 
$x\in P$, there exists a vector $v_x$ in $P$ with starting point $x$ and  slope $s$.
\end{definition}

\vspace{0.2cm}

The following lemmas are the analogous  of Lemma \ref{finite lmm} and Lemma \ref{decomposition lmm}
in higher dimension and their  proofs are similar.

\vspace{0.2cm}

\begin{lemma}\label{hfinite lmm}
 Let $V \subset (\C^*)^n$ be a generic  $k$-dimensional analytic variety with defining ideal $\mathcal{I}(V)$ 
such that $n\geq 2k$, and  the volume of its amoeba is finite. Let $s\in \Verte (\mathscr L^\infty (V))$ be a 
vertex, and $\Sigma_s$ be the open sub-complex of $\mathscr L^\infty (V)$ with only one vertex $s$. Then, there is
 a finite number of  polyhedrons $P$ of dimension $k$, asymptotic to the amoeba $\mathscr{A}(V)$ in the direction of $s$, and such that $\overline{r(P)}\cap S^{n-1}\subset \Sigma_s$.
\end{lemma}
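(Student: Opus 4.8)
The plan is to mimic the curve-case argument of Lemma~\ref{finite lmm}, replacing circles by $k$-dimensional flat tori in the phase limit set and replacing ``area'' by $2k$-dimensional volume. Suppose, for contradiction, that infinitely many $k$-dimensional polyhedra $P_1, P_2, \ldots$ are asymptotic to $\mathscr{A}(V)$ in the direction of $s$, with $\overline{r(P_j)}\cap S^{n-1}\subset \Sigma_s$. Each such $P_j$ lifts, via $\Log^{-1}$, to a piece of $V$ whose argument image accumulates onto a $k$-dimensional sub-torus $T_s\subset \mathscr{P}^\infty(V)$ associated to the vertex $s$ (here $s$ has rational slope by Proposition~\ref{finite proposition}, so the relevant immersed torus is exactly $k$-dimensional, not bigger). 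As in Lemma~\ref{finite lmm}, I would split into two cases: either the family of $k$-tori in $(S^1)^n$ corresponding to these ends is finite, in which case one fixed $k$-torus $T$ is hit by infinitely many of the $P_j$'s; or the family is infinite, in which case compactness of $(S^1)^n$ forces an accumulation $k$-torus $T$ with the same slope data. In both cases we produce a single fixed $k$-torus receiving the arguments of infinitely many distinct ends.

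Next I would localize: choose a regular open set $U\subset co\mathscr{A}(V)$ of positive $2k$-volume contained (up to the relevant immersion) in $T$, sitting in the part of the coamoeba that is genuinely $2k$-dimensional. Because the $P_j$ are distinct polyhedra all asymptotic in the direction $s$ and all projecting under $r$ into $\Sigma_s$, one can select, inductively, connected components $\widetilde V_j$ of $\Arg^{-1}(U)$ lying over the successive ends — exactly the inductive construction in Lemma~\ref{rational lmm}, using density of the immersed $k$-plane of slope $s$ inside $T_s$ to push each new piece off the closures of the previous ones. Setting $V_j=\Log(\widetilde V_j)$ gives infinitely many pairwise disjoint open subsets of $\mathscr{A}(V)$. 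The key geometric input, already invoked in the curve case and in Proposition~\ref{finite proposition}, is that $|\J(\Log\circ\Arg^{-1})|=1$ on $V\setminus S$ (Proposition~\ref{jacobian det}), so each $V_j$ has $2k$-volume equal to $vol(U)>0$. Summing over $j$ gives $vol(\mathscr{A}(V))=\infty$, contradicting the hypothesis. Hence only finitely many such polyhedra $P$ exist.

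The main obstacle, and the place where genuine care is needed beyond a verbatim transcription of the curve proof, is the bookkeeping of the inductive choice of the $\widetilde V_j$ in the higher-dimensional setting: one must ensure that ``moving off the closures of the previous pieces'' can be carried out using translates of the immersed $k$-plane of slope $s$ inside the fixed torus $T$, and that the resulting components $V_j$ genuinely live over distinct ends (so that disjointness in $V$, hence in $\mathscr{A}(V)$, is not lost when several ends share the same limit torus). This is why one first reduces, via the two-case analysis, to a single fixed torus $T$: once $T$ is fixed, the density argument from Lemma~\ref{rational lmm} applies essentially unchanged. I would also remark that the statement and proof of Lemma~\ref{decomposition lmm}'s higher-dimensional analogue depend on this lemma precisely as in the curve case, so the reduction ``assume only one polyhedron asymptotic in the direction of $s$'' becomes legitimate — which is the only use we make of Lemma~\ref{hfinite lmm} downstream.
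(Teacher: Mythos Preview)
Your argument is essentially the paper's own proof, carried out with more detail: the paper also assumes infinitely many such polyhedra, splits into the two cases ``finitely many corresponding $k$-tori in $(S^1)^n$'' versus ``infinitely many, hence an accumulation $k$-torus'', and in each case derives a contradiction with the finiteness of $vol(\mathscr{A}(V))$ via the infinite covering of a regular piece of the coamoeba. The only minor wrinkle is your appeal to ``density of the immersed $k$-plane of slope $s$ inside $T_s$'': since $s$ is rational here, that immersed $k$-plane is already a closed $k$-torus, so the separation of the $\widetilde V_j$ comes simply from the fact that they lie over distinct ends, not from an irrational-density mechanism as in Lemma~\ref{rational lmm}(i).
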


\begin{proof}
If the number of  polyhedrons $P$ of dimension $k$, asymptotic to the amoeba $\mathscr{A}(V)$ in  direction  $s$, and such that $\overline{r(P)}\cap S^{n-1}\subset \Sigma_s$,  is infinite, then there exists a sequence of parallel polyhedrons $\{P_m\}$  satisfying the same property. Hence, there are two possibilities: (i) the number of their corresponding $k$-dimensional real torus in $(S^1)^n$ is finite, which means that at least one of them is covered under the argument map infinitely many times. This contradict the fact that the volume of the amoeba is finite. (ii) the number of their corresponding $k$-dimensional real torus in $(S^1)^n$ is  infinite, which means that they contain at least one accumulation $k$-torus (of course parallel to all of them). This is a contradiction with the assumption on the volume of the amoeba.
\end{proof}

%\begin{proof}
%The lemma  \ref{hfinite lmm} is the analogous  of Lemma \ref{finite lmm} in higher dimension and its proof is  exactly  the same  as   Lemma \ref{finite lmm}  proof.
%a straightforward verification.
%\end{proof}

\vspace{0.2cm}

Let $V \subset (\C^*)^n$ be a   generic $k$-dimensional analytic variety with defining ideal 
$\mathcal I(V)$ such that the volume of its amoeba is finite.
 Let $s$ be a vertex of $\mathscr L^\infty (V)$ with slope $(u_1,\ldots,u_n)$,
and $\mathscr{P}_s$ be the finite set of polyhedrons in the direction of $s$, and asymptotic to the amoeba $\mathscr{A}(V)$.
We denote  
by $\mathscr{H}({\mathscr{P}_s})$   the  union of the holomorphic cylinders  which are the lifting of the 
polyhedrons in $\mathscr{P}_s$, and asymptotic to the ends of $V$ corresponding to $s$.

\begin{lemma}\label{hdecomposition lmm}
 Let $V \subset (\C^*)^n$ be a   generic $k$-dimensional analytic variety.
 Then,  the ideal $ \mathcal I(V)$ is generated by a set of holomorphic functions $\{f_1,\ldots,f_q\}$
 such that for any $j=1,\ldots, q$, we have $f_j=h_jg_j$, where $h_j$ are holomorphic functions  without  zeros in a neighborhood of $\mathscr{H}({\mathscr{P}_s})$,  and $g_j$  are entire functions where the terms of their power series have powers in the closed  half space $\{ \alpha \, |\,  \alpha . u\leq 0\}$  where $\alpha . u = \sum_{i=1}^n\alpha_i.u_i$.
\end{lemma}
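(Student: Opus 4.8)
The plan is to transpose the proof of Lemma~\ref{decomposition lmm} to the $k$-dimensional setting, replacing each $1$-dimensional ingredient of the curve case by its $k$-dimensional analogue: a vertex $s$ of $\mathscr{L}^\infty(\Cr)$ becomes a vertex $s$ of $\mathscr{L}^\infty(V)$ together with its star $\Sigma_s$; the immersed circles of slope $s$ in $(S^1)^n$ become the immersed $k$-tori attached to $s$; the holomorphic cylinder $\mathscr{H}(D_s)$ becomes the union $\mathscr{H}(\mathscr{P}_s)$ of $k$-dimensional holomorphic cylinders over the polyhedra of $\mathscr{P}_s$; and the roles played in the curve case by Lemma~\ref{rational lmm} and Lemma~\ref{finite lmm} are now played by Proposition~\ref{finite proposition} and Lemma~\ref{hfinite lmm}.

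Concretely, first I would record that by Proposition~\ref{finite proposition} the slope $u=(u_1,\dots,u_n)$ of $s$ is rational, so after clearing denominators we may assume $u\in\Z^n$; and by Lemma~\ref{hfinite lmm} the set $\mathscr{P}_s$ is finite, so $\mathscr{H}(\mathscr{P}_s)$ is a finite union of $k$-dimensional holomorphic half-cylinders, each asymptotic to exactly one end of $V$ over $s$. It then suffices to produce the required factorization of each generator on a fixed neighbourhood of each of these finitely many cylinders and to intersect the neighbourhoods. So I fix a finite system of generators $\{f_1,\dots,f_q\}$ of $\mathcal I(V)$, fix $f=f_j$, expand it as a power series $f=\sum_\alpha c_\alpha z^\alpha$, and sort its monomials $z^\alpha$ by the value of the linear functional $\ell_u(\alpha)=\alpha\cdot u=\sum_{i=1}^n\alpha_iu_i$.

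The core claim, as in the curve case, is that the set of values $\ell_u(\alpha)$ realized by monomials of $f$ that are asymptotically dominant along $\mathscr{H}(\mathscr{P}_s)$ is bounded above; granting this, one absorbs the finitely many top $\ell_u$-levels of $f$ into a factor $h_j$ which is a unit on a neighbourhood of $\mathscr{H}(\mathscr{P}_s)$ — in the simplest case $h_j$ is a monomial $z^{\beta_j}$, a unit on $(\C^*)^n$ — leaving $f_j=h_jg_j$ with $g_j$ entire and $\mathrm{supp}(g_j)\subset\{\alpha:\alpha\cdot u\le 0\}$; since each $h_j$ is zero-free near $\mathscr{H}(\mathscr{P}_s)$, the $g_j$ generate there the same ideal as the $f_j$, while $\{f_1,\dots,f_q\}$ still generates $\mathcal I(V)$ globally. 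To prove the claim I would argue by contradiction exactly as in Lemma~\ref{rational lmm}(ii) and Proposition~\ref{finite proposition}: if the active $\ell_u$-levels along $\mathscr{H}(\mathscr{P}_s)$ were unbounded they would accumulate, forcing $\mathscr{L}^\infty(V)$ to possess an accumulation $(k-1)$-cell at $s$; equivalently there would be a regular open subset of $co\A(V)$ covered infinitely many times by $\Arg$, and since $\bigl|\J(\Log\circ\Arg^{-1})\bigr|=1$ by Proposition~\ref{jacobian det}, each of these infinitely many disjoint preimages would carry a region of $\mathscr{A}(V)$ of the same positive volume, whence $vol(\mathscr{A}(V))=+\infty$, contradicting the hypothesis. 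Genericity of $V$ (it lies in no proper subtorus) is used, as in Lemma~\ref{henriques lmm}, to ensure that the $g_j$ genuinely cut out the end of $V$ over $s$ and not some larger set.

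The main obstacle is, as in the curve case, the passage from ``the generator $f_j$ admits no splitting with $g_j$ supported in $\{\alpha\cdot u\le 0\}$'' to ``$\mathscr{L}^\infty(V)$ has an accumulation cell at $s$'': this requires a careful description of how the supports of the $f_j$ govern the asymptotic directions (the slopes of the ends) of $V$ near the end over $s$, which is genuine work in the transcendental setting, where one must also check that the factor $h_j$ collecting the finitely many top $\ell_u$-levels is holomorphic and zero-free not merely on the cylinders $\mathscr{H}(\mathscr{P}_s)$ themselves but on a full open neighbourhood of them.
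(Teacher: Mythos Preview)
Your proposal is correct and follows essentially the same approach as the paper: the paper's proof of this lemma is literally the single sentence ``The proof of this lemma is similar to the proof of Lemma~\ref{decomposition lmm},'' and your write-up is precisely a spelling-out of that similarity, with the correct higher-dimensional substitutions (Proposition~\ref{finite proposition} and Lemma~\ref{hfinite lmm} in place of Lemma~\ref{rational lmm} and Lemma~\ref{finite lmm}, $k$-tori and $\mathscr{H}(\mathscr{P}_s)$ in place of circles and $\mathscr{H}(D_s)$). The contrapositive step you isolate --- ``no factorization $\Rightarrow$ accumulation $(k-1)$-cell at $s$ $\Rightarrow$ infinite amoeba volume via $|\J(\Log\circ\Arg^{-1})|=1$'' --- is exactly the mechanism the paper invokes in the curve case and tacitly transports here; your closing caveat about this passage being ``genuine work in the transcendental setting'' is a fair assessment of the level of detail in the paper's own argument as well.
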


\begin{proof}
The proof of this lemma is similar to the proof of Lemma \ref{decomposition lmm}.
\end{proof}

\vspace{0.3cm}

\begin{proof} [End  of  Theorem \ref{main theorem}  proof]  The second implication i.e., (ii) $\Rightarrow $ (i) is a consequence of
 Theorem \ref{comparison theorem}. Without loss of generality, we may assume that $V$ is irreducible. So,  (i) $\Rightarrow$ (ii) is a consequence of Proposition \ref{finite proposition}, Lemma \ref{hfinite lmm}, Lemma \ref{hdecomposition lmm}, and Lemma \ref{henriques lmm}.
\end{proof}

\vspace{0.2cm}
%%%%%%%%%%%%%%%%%%%%%%%%%%%%%%%%%%%
\section{Examples}\label{examples}
%%%%%%%%%%%%%%%%%%%%%%%%%%%%%%%%%%%
\vspace{0.2cm}

{\bf 1.} Let $\Cr$ be the complex plane  curve in $(\C^*)^2$ given by the zeros of the holomorphic function 
$f(z_1,z_2)=z_2-e^{z_1}$. Since the complex rank of $\Jac(f)$ is one, $\Cr$ is a Riemann surface. A parametrization
 of $\Cr$ is given by  $t\in \mathbb{C}^*\mapsto (t,e^t)\in (\mathbb{C}^*)^2$. The amoeba $\A(\Cr)$ is the set of points in $\R^2$ delimited by the graphs of the
 two functions $x\mapsto \pm e^x$ (see Figure \ref{exp}).   The set of critical points of $\Log$ restricted to
 $\Cr$ is $S=\{(x,e^x)\in(\R^*)^2\}$. The map $\Log$  is 2-sheets covering over its regular values. However, the 
map $\Arg$ is not a locally finite covering (i.e., $\#\Arg^{-1}\theta=+\infty$ for any regular value 
$\theta\in co\A$). Since the closure of the coamoeba is compact, its area is always finite. But, the amoeba has an
 infinite area.  This means that the assumption on Theorem \ref{comparison theorem} is necessary.\\
Moreover, we can check that the logarithmic limit set of $\Cr$ has 1-dimensional connected component and one 
isolated point of rational slope. The phase limit set of $\Cr$ is the whole torus.

\begin{figure}[h!]
\begin{center}
\includegraphics[angle=0,width=0.6\textwidth]{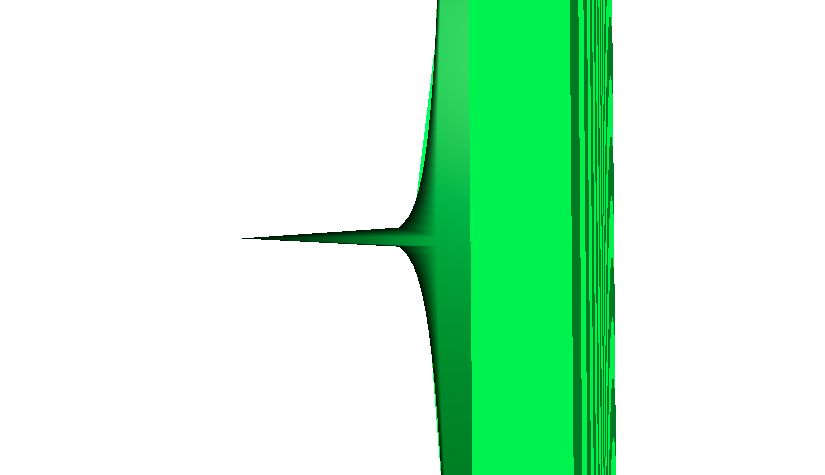}\qquad\qquad
\caption{The amoeba of the plane holomorphic  curve given by the parametrization $g(z)=(z,e^z)$.}
\label{exp}
\end{center}
\end{figure}

\vspace{0.3cm}

{\bf 2.} Let $\Cr$ be the complex plane curve in $(\C^*)^2$ parametrized by 
\begin{equation*}
 \begin{split}
\rho: D &\longrightarrow (\C^*)^2\\
  t &\longmapsto (\cos t, \sin t)
 \end{split}
\end{equation*}

 where  $D=\{t=a+ib \, \vert\, (a,b)\in (]0,2\pi[\setminus\{\frac{\pi}{2},\pi,\frac{3\pi}{2}\})\times \R  \}$ is the fundamental domain. We can check that $\Cr$ is contained in the algebraic curve with defining polynomial $p(x,y)=x^2+y^2-1$. On the other hand, we know that the last curve is irreducible. Hence, $\Cr$ is algebraic and defined by the same polynomial. The critical points of $\Log$ are $\rho((]0,2\pi[\setminus\{\frac{\pi}{2},\pi,\frac{3\pi}{2}\})\times\{0\})$. The logarithmic limit set consists of three points with coordinates $(-1,0)$, $(0,-1)$ and $(\frac{\sqrt 2}{2},\frac{\sqrt 2}{2})$. The phase limit set consists of three pairs of circles with distinct slopes given by $-\infty$, 0 and 1.
   
\vspace{0.2cm}

{\bf 3.} Let $\Cr$ be the complex plane curve in $(\C^*)^2$ parametrized by 
\begin{equation*}
 \begin{split}
\rho: D &\longrightarrow (\C^*)^2\\
  t &\longmapsto (z,e^z,z+1)
 \end{split}
\end{equation*}

The logarithmic limit set of $\Cr$ is the union of two points and an arc in the sphere $S^2$. Its phase limit set is an arrangement of two circles and 2-dimensional flat torus. The curve $\Cr$ is not algebraic and the area of its amoeba (see Figure 2) is infinite.

\begin{figure}[h!]
\begin{center}
\includegraphics[angle=0,width=0.8\textwidth]{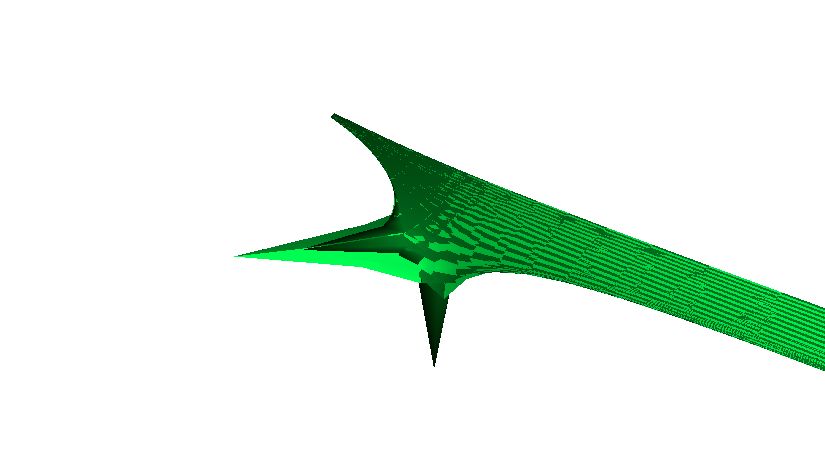}\qquad\
\caption{The amoeba of the spatial holomorphic  curve given by the parametrization $g(z)=(z,e^z,z+1)$.}
\label{exp Spatial}
\end{center}
\end{figure}

\vspace{0.2cm}

%%%%%%%%%%%%%%%%%%%%%%%%%%%%%
\section{Amoebas of $k-$dimensional affine linear spaces}\label{k-plan}
%%%%%%%%%%%%%%%%%%%%%%%%%%%%%

\vspace{0.1cm}

Let $k$, and $s$ be two positives integers, and $\mathscr{P}(k)\subset (\mathbb{C}^*)^{k+s}$ be the  affine
linear space of dimension  $k$ given by the  parametrization $\rho$:
\[
\begin{array}{ccccl}
\rho&:&(\mathbb{C}^*)^k&\longrightarrow&(\mathbb{C}^*)^{k+s}\\
&&(t_1,\ldots ,t_k)&\longmapsto&(t_1,\ldots ,t_k,f_1(t_1,\ldots ,t_k),\ldots ,f_s(t_1,\ldots ,t_k)),
\end{array}
\]
such that  $f_j(t_1,\ldots ,t_k) = b_j+\sum_{i=1}^ka_{ji}t_i$ where  $a_{ji}$ and $b_j$ are  complex numbers for
 $i=1,\ldots , k$  and $j=1,\ldots , s$. First of all, we can assume that
 $f_1(t_1,\ldots ,t_k) = 1+\sum_{i=1}^kt_i$. Moreover, we assume that the affine linear spaces are in general position. 
\begin{definition} 
Let $V\subset (\mathbb{C}^*)^n$ be an algebraic subvariety, and $\conj$ be the  involution on $\mathbb{C}^n$ given by conjugation on each coordinate. We say the variety  $V$ is real if it is invariant under $\conj$, i.e., $\conj (V) = V$. The real part of  $V$ denoted by $\mathbb{R}V$  is the set of points in $V$ fixed by $\conj$.
\end{definition}

In this section, we assume  $n=2k+m$, where $m$ is a nonnegative integer (i.e., $s=k+m$). The goal of this section is to
 prove the following theorem:

\begin{theorem}\label{k-plan thm}
Let $\mathscr{P}(k)$ be a generic linear space in $(\mathbb{C}^*)^{2k+m}$, and $\theta$ is a regular value of the 
argument map. Then, the cardinality of $\Arg^{-1}(y)$ is equal to one.
\end{theorem}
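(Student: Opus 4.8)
The plan is to reduce the statement to a linear-algebra computation, using crucially that the first $k$ coordinates of the parametrization $\rho$ are the parameters $t_1,\dots,t_k$ themselves. Fix a regular value $\theta=(\phi_1,\dots,\phi_k,\psi_1,\dots,\psi_s)$ of the argument map lying in $co\A$, so that $\Arg^{-1}(\theta)$ is nonempty. Since $\rho$ is injective, a point of $\Arg^{-1}(\theta)$ corresponds to a unique $t\in(\C^*)^k$ with $\arg t_i=\phi_i$ for $i=1,\dots,k$ and $\arg f_j(t)=\psi_j$ for $j=1,\dots,s$; writing $t_i=r_ie^{i\phi_i}$ with $r_i>0$, the first step is to observe that the conditions $\arg f_j(t)=\psi_j$ amount to the identities $\mathrm{Im}\bigl(e^{-i\psi_j}f_j(t)\bigr)=0$ together with $\mathrm{Re}\bigl(e^{-i\psi_j}f_j(t)\bigr)>0$, and that the former are \emph{affine-linear} equations in $r=(r_1,\dots,r_k)\in\R^k$, one for each $j$:
\[
\sum_{i=1}^{k}M_{ji}(\theta)\,r_i=c_j(\theta),\qquad
M_{ji}(\theta)=\mathrm{Im}\bigl(a_{ji}e^{i(\phi_i-\psi_j)}\bigr),\qquad
c_j(\theta)=-\mathrm{Im}\bigl(b_je^{-i\psi_j}\bigr).
\]
The point to stress is that the $s\times k$ coefficient matrix $M=M(\theta)$ depends only on $\theta$, not on the chosen point of the fiber.

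Next I would compute the rank of $\Jac(\Arg\circ\rho)$ at a point $t$ of the fiber, in the real coordinates $(r_1,\dots,r_k,\phi_1,\dots,\phi_k)$. Because the first $k$ output components of $\Arg\circ\rho$ are exactly $\phi_1,\dots,\phi_k$, this Jacobian has the block form $\left(\begin{smallmatrix}0&I_k\\ A&B\end{smallmatrix}\right)$, where $A=\bigl(\partial(\arg f_j)/\partial r_i\bigr)$ has size $s\times k$; hence $\mathrm{rank}\,\Jac(\Arg\circ\rho)_t=k+\mathrm{rank}\,A$. A one-line computation gives $A_{ji}=\mathrm{Im}\bigl(a_{ji}e^{i\phi_i}/f_j(t)\bigr)$, and since $t$ lies in the fiber we have $f_j(t)=|f_j(t)|\,e^{i\psi_j}$, so $A=\mathrm{diag}\bigl(|f_1(t)|^{-1},\dots,|f_s(t)|^{-1}\bigr)\,M(\theta)$ and therefore $\mathrm{rank}\,A=\mathrm{rank}\,M(\theta)$. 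Consequently $\theta$ being a regular value, i.e. $\mathrm{rank}\,\Jac(\Arg\circ\rho)_t=2k$ at every preimage, is equivalent to $\mathrm{rank}\,M(\theta)=k$.

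Combining the two steps concludes the argument: since $\theta\in co\A$ the fiber is nonempty, so the affine system $M(\theta)\,r=c(\theta)$ has at least one solution in $\R^k$; and since $\theta$ is a regular value, $M(\theta)$ has full column rank $k$, so that solution is unique. Hence $\Arg^{-1}(\theta)$ consists of a single point. I expect the only delicate points to be matters of interpretation rather than of substance. One must read \emph{regular value} in the sense already used around Theorem~\ref{comparison theorem}, namely $\theta\notin\Arg S$ (when $s>k$ the differential of $\Arg$ can never be surjective, so ``regular'' here has to mean that $\Arg$ is an immersion at every preimage of $\theta$). And the general-position hypothesis on $\mathscr{P}(k)$ is precisely what guarantees that regular values exist, i.e. that $\Arg S\ne co\A$ — for instance it forbids a variable $t_i$ from being absent in all the $f_j$, which would force a zero column in $M(\theta)$ for every $\theta$; the linear-algebra core of the proof itself needs nothing beyond the affine-linear form of $\rho$. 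This exact count of the fibers of $\Arg$ is then the input required to apply Theorem~\ref{comparison theorem} to the linear spaces $\mathscr{P}(k)$.
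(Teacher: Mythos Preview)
Your proof is correct. Both your argument and the paper's rest on the same observation: once the arguments $\phi_i$ of the $t_i$ are fixed by $\theta$, the remaining constraints $\arg f_j(t)=\psi_j$ are \emph{affine-linear} in the radii $r_i=|t_i|$, so the fiber $\Arg^{-1}(\theta)$ is (the intersection with an open cone of) an affine subspace; if it contains two distinct points it contains a segment, hence has positive dimension, and $\theta$ cannot be regular.

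The difference is in execution. The paper argues geometrically: it views each $f_l$ as a closed polygonal chain in $\C$ with edges $b_l$ and $a_{lj}t_j$ of prescribed directions, and asserts (with only a sketch) that one can interpolate between two distinct preimages $t,t'$ to produce a one-parameter family in the fiber, whence $\dim\Arg^{-1}(\theta)\geq 1$. You instead write the linear system $M(\theta)r=c(\theta)$ explicitly, compute the Jacobian of $\Arg\circ\rho$ in block form $\left(\begin{smallmatrix}0&I_k\\A&B\end{smallmatrix}\right)$, and identify regularity of $\theta$ with $\mathrm{rank}\,M(\theta)=k$ via the relation $A=\mathrm{diag}(|f_j(t)|^{-1})\,M(\theta)$. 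This yields a cleaner, fully self-contained argument: the chain ``$\theta$ regular $\Leftrightarrow$ $M(\theta)$ has full column rank $\Leftrightarrow$ unique solution of $Mr=c$'' is pure linear algebra, and the open constraints $r_i>0$, $\mathrm{Re}(e^{-i\psi_j}f_j)>0$ play no role beyond guaranteeing that the fiber is nonempty. Your explicit reading of ``regular value'' as ``$\Arg\circ\rho$ is an immersion at every preimage'' (forced since $s\geq k$ precludes surjectivity of the differential) is also a clarification the paper leaves implicit.
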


Before we give the proof of this theorem, let us  start by looking to  the case $k=1$, and the following remark:

\vspace{0.2cm}

\begin{remark}  {\rm
Without loss of generality, we can assume that $\mathscr{P}(1)$ is the line in $(\mathbb{C}^*)^{2+m}$ given by the parametrization:
\[
\begin{array}{ccccl}
\rho&:&\mathbb{C}^*&\longrightarrow&(\mathbb{C}^*)^{2+m}\\
&&t_1&\longmapsto&(t_1,f_1(t_1),\ldots ,f_{1+m}(t_1)),
\end{array}
\]
such that  $f_1(t_1) = 1 + t_1$, and 
$f_j(t_1) = b_j+ a_{j1}t_1$, where  $a_{j1}$ and $b_j$ are  complex numbers for $j=2,\ldots , 1+m$. Let $y = (y_1,\ldots, y_{2+m})$ be a regular value of the logarithmic map. The number of point $t_1\in \mathbb{C}^*$ such that $|t_1| = e^{y_1}$, and $|f_1(t_1)| = e^{y_2}$ is at most two (because the intersection of two circles cannot exceed two points; the first circle has a center at the origin and radius $e^{y_2}$ and the second has the center at $(1,0)$ and radius $e^{y_1}$). It is clear that if $y$ is regular, then that  number  is equal two. Otherwise, the circles are tangent, and then $y$ is critical. Indeed,  if we make  a small perturbation of $t_1$ in some direction, the point goes out of the amoeba, which means that $y$ is in the boundary of the amoeba.  Moreover, we can check 
 that these two points are conjugate. 
 So, if the number is equal two, then $|f_j(t_1)| $ should be equal to  $|f_j(\overline{t}_1)|$ for any $j=1,\ldots, 1+m$. This means that $| b_j+ a_{j1}t_1| = | b_j+ a_{j1}\overline{t}_1|$, or equivalently $\cos (\arg (a_{j1}) + \arg
  (t_1) - \arg (b_j))  = \cos (\arg (a_{j1}) + 2\pi - \arg (t_1) - \arg (b_j))$. Hence, $\arg (a_{j1}) - \arg (b_j) = 0 \mod (\pi )$, which is equivalent to $\frac{a_{j1}}{b_j}\in \mathbb{R}^*$ for any $j$, and then the curve is real. Otherwise, the cardinality of $\Log^{-1}(y)$ is equal one. }
  
\end{remark}

\vspace{0.2cm}

%\newpage
\begin{proof}[Proof of Theorem \ref{k-plan thm}]
Let $\theta = (\theta_1,\ldots ,\theta_k, \psi_1,\ldots ,\psi_{k+m})$ be a regular value of the argument map. 
If $\rho(t)\in \mathscr{P}(k)$ belongs to the inverse image by the argument map of $\theta$, then $\arg (t_j) = \theta_j$ 
for each $j=1,\ldots ,k$, and $\arg(f_l(t)) =\psi_l$ for each $l=1,\ldots , k+m$. The $k$-plane $\mathscr{P}(k)$
 is parametrized by $\rho$ as above.  For each $l$,  we can always view $f_l$, $b_l$, and the $a_{lj}t_j$'s as vectors in the plane $\mathbb{C}$, such that their arguments are in the  increasing order.
If we  put them in the plane with this order, we obtain a convex polygon. 
We can check that if there exist  $t\ne t'$ with $\rho (t)$ and $\rho (t')$  in $\mathscr{P}(k)$, and 
$\Arg (\rho (t)) = \Arg (\rho (t')) = \theta$,
then the inverse image of $\theta$ by $\Arg$ has dimension strictly greater than zero. Therefore, $\theta$ in not regular.  In fact, if $|t_j| \ne |t'_j|$ for some $j$, then for any 
$(\lambda |t_j| + (1-\lambda )|t'_j|)e^{\theta_j}$, with $\lambda\in [0,1]$, there exist $t_s=\mu_s e^{\theta_s}$ with
 $\mu_s\in \mathbb{R}_+$,\, $s\in \{1,\ldots ,k\}\setminus \{ j\}$, and $\Arg (f_l)=\psi_l$ for each 
$l=1,\ldots , k+m$. This means that the dimension of $\Arg^{-1}(\theta )$ is at least one.
We conclude that the cardinality of $\Arg^{-1}(\theta)$ is equal one for any regular point $\theta$.
\end{proof}

\vspace{0.2cm}

In the case of $k$-dimensional real linear spaces of  $(\mathbb{C}^*)^{2k}$,
 the second author, Johansson and Passare  \cite{JNP-10} prove that each  regular value of the logarithmic map is covered by $2^k$ points, and the volume of the coamoeba is equal to $\pi^{2k}$. Hence, the numbers $p$ and $P$ of  Theorem \ref{comparison theorem}, are equal  to $\frac{1}{2^k}$  in this case. 
 
 \begin{corollary}
 Let $\mathscr{A}(k)$ be the amoeba of a $k$-dimensional real linear space of $(\mathbb{C}^*)^{2k}$. Then, 
  $$
   vol(\A) =  \frac{\pi^{2k}}{2^k}.
  $$.
\end{corollary}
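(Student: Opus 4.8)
The plan is to derive this as an immediate consequence of Theorem \ref{comparison theorem} together with the computation of \cite{JNP-10} quoted just above. Let $\mathscr{P}(k)$ be a $k$-dimensional \emph{real} linear space in $(\C^*)^{2k}$, so that $n=2k$, $m=0$, and write $V=\mathbb{R}\mathscr{P}(k)$'s ambient complex linear space (i.e. the complex $k$-plane defined over $\R$). Since $V$ is a generic complex algebraic variety of dimension $k$ in $(\C^*)^{2k}$, both $\Log:V\to\A$ and $\Arg:V\to co\A$ are locally finite coverings, so Theorem \ref{comparison theorem} applies: one has $p\,vol(co\A)\le vol(\A)\le P\,vol(co\A)$ with $p,P$ the ratios of extremal fiber cardinalities.

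The key step is to observe that for a real $k$-plane in $(\C^*)^{2k}$ these ratios collapse to a single number. By \cite{JNP-10}, every regular value of the logarithmic map has exactly $2^k$ preimages, so $\max_{y\in\A\setminus\Log S}\#\Log^{-1}\{y\}=\min_{y\in\A\setminus\Log S}\#\Log^{-1}\{y\}=2^k$. On the other hand, by Theorem \ref{k-plan thm} (applied with $m=0$), every regular value of the argument map has exactly one preimage, so $\max_{\theta\in co\A\setminus\Arg S}\#\Arg^{-1}\{\theta\}=\min_{\theta\in co\A\setminus\Arg S}\#\Arg^{-1}\{\theta\}=1$. Substituting into the definitions of $p$ and $P$ gives $p=P=\tfrac{1}{2^k}$, exactly as remarked in the paragraph preceding the corollary. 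Hence the two inequalities of Theorem \ref{comparison theorem} become the single identity
\begin{equation*}
vol(\A)=\frac{1}{2^k}\,vol(co\A).
\end{equation*}

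Finally I would insert the value $vol(co\A)=\pi^{2k}$, again from \cite{JNP-10}, to conclude $vol(\A(k))=\pi^{2k}/2^k$. The only point requiring a word of care is the consistency of the normalizations: one must make sure that the coamoeba volume $\pi^{2k}$ from \cite{JNP-10} is computed with respect to the same induced metric $\imath^*\mathcal{E}_{2n}$ used in the proof of Theorem \ref{comparison theorem} (equivalently, that the coamoeba is viewed inside $\mathbb{T}^{2k}=(S^1)^{2k}$ with its flat product metric, for which the total volume is $(2\pi)^{2k}$ and a fundamental "octant"-type region has volume $\pi^{2k}$), and that the critical sets $S$ are genuinely of measure zero so that the passage $vol(\A)=vol(\A\setminus\Log S)$ is legitimate — but this last point is already granted by Sard's theorem as in the proof of Theorem \ref{comparison theorem}. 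I expect no serious obstacle here; the corollary is essentially a bookkeeping consequence of the two covering-degree computations combined with the comparison theorem.
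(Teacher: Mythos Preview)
Your argument is correct and coincides with the paper's own: it combines Theorem \ref{comparison theorem} with the fiber counts $2^k$ for $\Log$ (from \cite{JNP-10}) and $1$ for $\Arg$ (Theorem \ref{k-plan thm}) to get $p=P=1/2^k$, then plugs in $vol(co\A)=\pi^{2k}$. The extra remarks on metric normalizations and Sard's theorem are sound but not needed beyond what the paper already records.
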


\vspace{0.2cm}

\bibliographystyle{amsalpha}
\bibliography{biblioM}

\end{document}